
\documentclass[10pt]{amsart}
\addtolength\headheight{4pt}
\usepackage{amssymb}
\usepackage{color}
\usepackage{graphicx}
\usepackage{float}
\usepackage[all,cmtip]{xy}
\setlength\parindent{0pt}
\setlength{\oddsidemargin}{5pt} \setlength{\evensidemargin}{5pt}
\setlength{\textwidth}{440pt}
\setlength{\topmargin}{-50pt}
\setlength{\textheight}{24cm}
\setlength{\parskip}{2.3mm plus0.3mm minus0.4mm}

\newcommand{\FX}{{\mathfrak{X}}}
\newcommand{\SF}{{\mathcal{F}}}
\newcommand{\SL}{{\mathcal{L}}}
\newcommand{\BW}{{\textbf{W}}}
\newcommand{\BN}{{\textbf{N}}}

\newcommand{\BK}{{\textbf{K}}}

\newcommand{\im}{\operatorname{Image}}

\newcommand{\lra}{\longrightarrow}


\newtheorem{proposition}{Proposition}
\newtheorem{theorem}{Theorem}
\newtheorem{definition}{Definition}
\newtheorem{lemma}{Lemma}

\newtheorem{corollary}{Corollary}
\newtheorem{remark}{Remark}

\begin{document}

\title{Foliated vector fields without periodic orbits}

\subjclass[2010]{Primary: 53C12.}
\date{March, 2015}

\keywords{Seifert Conjecture, foliation, Kuperberg plug}

\author{Daniel Peralta--Salas}
\address{Instituto de Ciencias Matem\'aticas -- CSIC.
C. Nicol\'as Cabrera, 13--15, 28049, Madrid, Spain.}
\email{dperalta@icmat.es}

\author{\'Alvaro del Pino}
\address{Universidad Aut\'onoma de Madrid and Instituto de Ciencias Matem\'aticas -- CSIC.
C. Nicol\'as Cabrera, 13--15, 28049, Madrid, Spain.}
\email{alvaro.delpino@icmat.es}

\author{Francisco Presas}
\address{Instituto de Ciencias Matem\'aticas -- CSIC.
C. Nicol\'as Cabrera, 13--15, 28049, Madrid, Spain.}
\email{fpresas@icmat.es}

\begin{abstract}
In this article parametric versions of Wilson's plug and Kuperberg's plug are discussed. We show that there is a weak homotopy equivalence induced by the inclusion between the space of non--singular vector fields tangent to a foliation and its subspace comprised of those without closed orbits, as long as the leaves of the foliation have dimension at least 3. We contrast this with the case of foliations by surfaces in 3--manifolds.
\end{abstract}

\maketitle
\section{Introduction}
The Seifert Conjecture \cite{Sei} stated that all non--singular vector fields in $\mathbb{S}^3$ have at least one closed orbit. A construction by Wilson, \cite{Wil}, shows that any non--singular vector field in a 3--dimensional manifold can be homotoped through non--singular vector fields to a vector field with only finitely many closed orbits, both of them with the same degree of differentiability. The same construction also proves that, for manifolds of dimension at least 4, any non--singular vector field can be homotoped through non--singular vector fields to one without closed orbits. 

After a result of Schweitzer \cite{Sch} proving that the Seifert Conjecture does not hold under $C^1$ regularity, Krystyna Kuperberg settled Seifert's Conjecture in the negative \cite{Kup}, by showing that any smooth non--singular vector field in a 3--dimensional manifold can be smoothly homotoped to a vector field with no closed orbits. 

These results can be restated as follows. Given a manifold $M$, denote by $\FX_{ns}(M)$ the space of smooth non--singular vector fields on $M$ and by $\FX_{no}(M)$ the space of smooth non--singular vector fields with no closed orbits, both of them endowed with the $C^\infty$--topology. Then Wilson's and Kuperberg's constructions show that the inclusion
\[ \iota_n: \FX_{no}(M) \lra \FX_{ns}(M) \]
induces a surjection in $\pi_0$ as long as $\dim(M) \geq 3$. 

Both Wilson's and Kuperberg's constructions are based around the notion of a plug. A plug is a local model for modifying a vector field in a flowbox. Wilson's plug in higher dimensions traps a non--empty open subset of orbits, while it creates no new closed ones. Kuperberg's plug in dimension $3$ creates no new closed orbits, and a later result by Matsumoto, \cite{Mat}, shows that the set of orbits that are trapped in Kuperberg's plug contains a non--empty open subset. 

Let $(M^{n+m}, \SF^n)$, $n \geq 3$, be a closed smooth ($n+m$)-dimensional manifold endowed with a smooth foliation of codimension $m$. Denote by $\FX_{ns}(M, \SF)$ and $\FX_{no}(M, \SF)$ the subsets of, respectively, $\FX_{ns}(M)$ and $\FX_{no}(M)$ consisting of vector fields tangent to $\SF$. The main result of this note is the following:

\begin{theorem} \label{thm:main}
The inclusion:
\[ \iota_n: \FX_{no}(M, \SF) \lra \FX_{ns}(M, \SF) \]
is a weak homotopy equivalence. That is, the induced maps in homotopy
\[ \pi_k\iota_n: \pi_k\FX_{no}(M, \SF) \lra \pi_k\FX_{ns}(M, \SF) \]
are isomorphisms. 
\end{theorem}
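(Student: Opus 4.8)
The plan is to reformulate the statement in relative terms and establish it by a parametric plug construction. Concretely, I would show that the pair has vanishing relative homotopy groups,
\[ \pi_k\bigl(\FX_{ns}(M,\SF),\, \FX_{no}(M,\SF)\bigr) = 0 \qquad \text{for all } k \geq 0, \]
which, by the long exact sequence of the pair, is equivalent to Theorem \ref{thm:main}: it yields both surjectivity and injectivity of each $\pi_k\iota_n$ at once. So I would start from a continuous family $\{X_s\}_{s\in D^k}$ of non-singular vector fields tangent to $\SF$ with $X_s \in \FX_{no}(M,\SF)$ for every $s \in \bd D^k = S^{k-1}$, and produce a homotopy, rel $S^{k-1}$ and through non-singular $\SF$-tangent fields, ending at a family lying entirely in $\FX_{no}(M,\SF)$.

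First I would set up the geometry. Since each $X_s$ is non-singular and tangent to $\SF$, the leafwise flow-box theorem provides, near any point, a foliated chart $U \iso D^{n-1} \x [-1,1] \x D^m$ in which the plaques are the slices $D^{n-1}\x[-1,1]\x\{t\}$ and $X_s$ restricts to the translation field $\bd/\bd z$ along the $[-1,1]$ factor; by compactness of $M$ and of $D^k$ these charts can be chosen to depend smoothly on $s$ over small parameter balls. The engine of the proof is then a parametric, foliated version of the classical plug --- exactly the parametric Wilson and Kuperberg plugs announced in the abstract. The properties I would require are: the plug acts leafwise, so the modified field stays tangent to $\SF$; it varies smoothly with the transverse parameter $t \in D^m$ and with $s \in D^k$; inside each leaf it creates no new closed orbits; its entry--exit map is matched, so orbits that traverse it are undisturbed outside the plug; and its set of trapped orbits contains a non-empty open subset of the entry face. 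For leaves of dimension $n \geq 4$, Wilson's plug already supplies both the absence of new closed orbits and an open trapped set; for $n = 3$ one must use Kuperberg's aperiodic plug together with Matsumoto's result \cite{Mat} to obtain the open trapped set. Finally the plug must degenerate to the trivial insertion for parameters near $S^{k-1}$, so that the whole construction is carried out rel boundary.

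With the parametric foliated plug available, the remaining step is to trap every closed orbit of every $X_s$ simultaneously. I would insert plugs along the foliated flow-boxes, positioned so that the open trapped regions of their entry faces meet every closed orbit of each $X_s$; since the plugs create no closed orbits leafwise and preserve pass-through orbits, the only closed orbits surviving in the modified field would be old closed orbits avoiding all plugs, of which there are none. Over $S^{k-1}$ the fields already lie in $\FX_{no}(M,\SF)$, the plugs reduce to the trivial insertion, and the homotopy is rel boundary, completing the argument.

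The \textbf{main obstacle} is precisely this trapping step in its parametric, foliated form. As $s$ ranges over $D^k$ the closed orbits of $X_s$ move, are born, and die, and may form a continuum, so one must arrange a finite, smoothly $s$-dependent collection of plugs whose open trapped regions catch all of them at once while staying tangent to the leaves and matching the trivial insertion near $S^{k-1}$. Compactness of $M \x D^k$ should furnish finiteness, and the \emph{openness} of the trapped set (Wilson for $n\geq 4$, Matsumoto for $n=3$) provides the robustness needed to choose the insertions continuously in $s$; the delicate points are the continuity in the parameter, the leafwise compatibility of the flow-boxes, and verifying that no closed orbit can slip between consecutive parameter balls. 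The hypothesis $n \geq 3$ is essential here: an aperiodic plug with an open trapped set requires at least three leaf dimensions, which is exactly what fails for foliations by surfaces and accounts for the contrasting behaviour in that case.
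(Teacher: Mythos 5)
Your reduction to the vanishing of the relative homotopy groups $\pi_k(\FX_{ns}(M,\SF),\FX_{no}(M,\SF))$ and your list of required plug properties (leafwise action, matched entry--exit, no new closed orbits, open trapped set via Wilson for $n\geq 4$ and Kuperberg--Matsumoto for $n=3$) match the paper's strategy. But there is a genuine gap at exactly the step you flag as the ``main obstacle,'' and the paper has a concrete idea that your plan is missing. Instead of choosing flowboxes and plug insertions that vary continuously with an \emph{external} parameter $s\in\mathbb{D}^k$, the paper absorbs the parameter into the foliation: the family $\{X_s\}$ is regarded as a \emph{single} non--singular vector field $X$ on the compact manifold $\mathcal{M}=M\x\mathbb{D}^k$, tangent to the product foliation $\SF_{\mathcal M}=\coprod_{s\in\mathbb{D}^k}\SF\x\{s\}$, which has codimension $m+k$. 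All the machinery (the flowbox decomposition of Proposition \ref{prop:main1}, the plug insertion of Proposition \ref{prop:main2}, and the parametric plugs, whose vector field is already trivial, equal to $\partial_z$, near the boundary of the transverse factor $\mathbb{D}^l$) is deliberately formulated for foliations of arbitrary codimension on manifolds with boundary and corners, precisely so it can be applied \emph{once} to this single field $X$, relative to a collar $A$ of $\bd\mathcal{M}=M\x\mathbb{S}^{k-1}$ on which $X$ has no closed orbits. There is then no patching between parameter balls, no continuity--in--$s$ bookkeeping, and no possibility of orbits ``slipping between'' parameter regions: these difficulties do not arise at all.

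Moreover, the specific trapping condition you propose --- positioning plugs so that their trapped regions meet \emph{every closed orbit} of every $X_s$ --- would fail in the parametric setting, because the set of closed orbits is not semicontinuous in $s$: under arbitrarily small changes of the parameter, closed orbits of arbitrarily long period can be born far away from any finite collection of plugs chosen at nearby parameter values, so no compactness-plus-openness argument can certify your condition. What the paper arranges instead (Proposition \ref{prop:main1}) is the robust covering condition that every orbit of $X$ either stays in $A$ or meets one of finitely many flowboxes $V_i$ whose entry faces $V_i^-$ lie inside the trapped sets of the inserted plugs; this is a condition on \emph{points} of $\mathcal{M}\setminus A$, independent of the orbit structure, and after insertion every orbit leaving $A$ is trapped in forward time and hence cannot close up. So your outline needs two repairs: replace ``catch the closed orbits'' by ``catch all orbits leaving a collar of the boundary,'' and replace the continuity--in--$s$ argument by the codimension--absorption trick above. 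With those changes it becomes the paper's proof.
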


In particular, in the case where $\SF$ is comprised of a single leaf, the whole of $M$, this recovers and improves the results of Wilson and Kuperberg, that dealt only with $\pi_0$. Also, it shows that the \textsl{foliated Seifert conjecture} -- every vector field tangent to a foliation has a closed orbit -- does not hold for foliations of dimension $n\geq 3$. 

\begin{remark}
For the result not to be trivial, the space $\FX_{ns}(M, \SF)$ should be non--empty. This reduces to a purely algebraic topology question. For instance, in the classical case (foliation consisting of a single leaf) the necessary and sufficient condition for non--triviality is $\chi(M)=0$. Another example: if you assume that the manifold is $4$-dimensional and oriented and the foliation is $3$--dimensional and oriented, in which case $\chi(M)=0$, the necessary and sufficient condition for non--triviality is $M$ being almost--complex.
\end{remark}

For contrast, the case where the leaves are two dimensional is discussed in the last section. It will be shown that there is an ample class of foliations for which all foliated vector fields must have a closed orbit. 

As in the classical case, it may be possible to find classes of vector fields tangent to the foliation always possessing a periodic orbit. As an example, there is a foliated version of the Weinstein conjecture for Reeb vector fields that has been proven and disproven in several instances. See \cite{PP} for more details.

\section{Setup and applications} \label{sec:setup}

In this article, discs are assumed to be of radius $1$, unless otherwise noted. For the rest of the section $\mathcal{M}^{n+l}$ will denote a smooth compact manifold, possibly with boundary and corners. Endow $\mathcal{M}$ with a smooth $n$-dimensional foliation $\SF^n_{\mathcal M}$ and a smooth non--singular vector field $X$, tangent to $\SF_{\mathcal M}$. Homotopies of vector fields will be of particular interest and, unless stated otherwise, they will always be through smooth non--singular vector fields tangent to $\SF_{\mathcal M}$. 

By a foliated flowbox, or simply a flowbox, it is meant an embedding 
\[ \phi: [-2,2] \times \mathbb{D}^{n-1} \times \mathbb{D}^l \to \mathcal{M} \]
with image $U \subset \mathcal{M}$, a smooth submanifold with corners. In the domain of $\phi$ there are coordinates $(z;x_2,\dots, x_n;y_1,\dots, y_l)$. We require for $\phi$ to satisfy $\phi^* \SF_{\mathcal M} =  \ker(dy_1) \cap \dots \cap \ker(dy_l)$ and $\phi^* X = \partial_z$. $U^+$, $U^-$ and $U^v$ denote the components of $\partial U$ in which $X$ is outgoing, ingoing, and tangent, respectively. If $V \subset U$ is another foliated flowbox such that $V^+ \subset U^+$, $V^- \subset U^-$ and $V^v \subset \overset{\circ}{U}$ then the pair $(U,V)$ will be called nice. The following proposition is key in the construction, and the proof is standard, as in \cite[Theorem A]{Wil}.

\begin{proposition} \label{prop:main1}
Let $\mathcal{M}$, $\SF_{\mathcal M}$ and $X$ be as above. Fix $A \subset \mathcal{M}$ an open neighbourhood of $\partial \mathcal M$. Then there is a finite number of pairs $(U_i,V_i)$ satisfying:
\begin{itemize}
\item each $(U_i,V_i)$ is a nice pair of foliated flowboxes,
\item any orbit of $X$ is either fully contained in $A$ or it intersects one of the $V_i$,
\item the $U_i$ are disjoint from $\partial \mathcal M$ and disjoint from one another. 
\end{itemize}
\end{proposition}

The idea now is homotoping $X$ within the flowboxes in order to ``open up'' all closed orbits without introducing new ones. Let $N^{n-1}$ be a manifold with boundary, possibly with corners, and denote $\BN = [-2,2] \times N \times \mathbb{D}^l$, with coordinates $(z;p;y_1,\dots, y_l)$. Assume that there is an embedding $\psi: \BN \to [-2,2] \times \mathbb{D}^{n-1} \times \mathbb{D}^l$ such that:
\begin{itemize}
\item $\psi$ is the identity in the $y$ coordinates,
\item $\psi$ preserves the vertical direction, i.e. $\psi^* \partial_z = \delta\partial_z$ with $\delta$ a positive function.
\end{itemize}
If $\BN$ is endowed with a vector field $X_N$ that agrees with $\partial_z$ close to $\partial \BN$ and that has no $\partial_{y_i}$ components for all $i$, we say that the pair $(\BN, X_N)$ is a parametric plug.

Denote by $\BN^+$, $\BN^-$, and $\BN^v$ the different components of $\partial \BN$, as in the case of flowboxes. A trajectory of $X_N$ intersecting $\BN^-$ is said to be entering the plug and a trajectory intersecting $\BN^+$ is said to be exiting the plug. Since these plugs are meant to be embedded in foliated flowboxes in order to replace $X$ by $X_N$, there are a number of properties that a parametric plug must satisfy:

\begin{enumerate}
\item[i.] $X_N$ must be homotopic to $\partial_z$, relative to the boundary, and through non--vanishing vector fields with no $\partial_{y_i}$ components, $i=1,\dots,l$, \label{plug_property1}
\item[ii.] if a trajectory of $X_N$ enters and exits the plug, then it must do so at opposite points $(-2,x_0,y_0)$ and $(2,x_0,y_0)$. \label{plug_property2}
\end{enumerate}
A trajectory entering the plug and remanining there for infinite time is called trapped. The first property ensures that if the plug is used within a foliated flowbox, then the homotopy obtained is indeed through non--vanishing vector fields tangent to $\SF_{\mathcal M}$. The second property ensures that no new closed orbits are created by connecting two previously different orbits. 

\begin{proposition} \label{prop:main2}
Following with the notation of Proposition \ref{prop:main1}, suppose that there is a parametric plug {\rm $\BN$} that additionally satisfies that:
\begin{enumerate}
\item[iii.] $X_N$ has no closed orbits within {\rm $\BN$}, \label{plug_property3}
\item[iv.] the set of trajectories of $X_N$ trapped by {\rm $\BN$} contains a non--empty open set {\rm $T_N \subset  \BN^-$}. \label{plug_property4}
\end{enumerate}
Then there is a homotopy of $X$, relative to $\partial \mathcal{M}$ and through non--singular vector fields tangent to $\SF_{\mathcal M}$, to a vector field $X'$ whose closed orbits are contained in $A$. 
\end{proposition}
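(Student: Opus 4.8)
The plan is to insert finitely many copies of the plug $\BN$ across the cross--sections $V_i^-$ furnished by Proposition~\ref{prop:main1}, modifying $X$ inside them so that every orbit crossing a $V_i$ becomes trapped, while property~(i) certifies that the modification is an admissible homotopy. First I would fix the pairs $(U_i,V_i)$ together with the defining embeddings $\phi_i \colon [-2,2]\times \mathbb{D}^{n-1}\times \mathbb{D}^l \to U_i$, for which $\phi_i^*X=\partial_z$ and $\phi_i^*\SF_{\mathcal M}$ is the horizontal foliation. For an admissible embedding $\psi\colon \BN \to [-2,2]\times \mathbb{D}^{n-1}\times \mathbb{D}^l$ the vector field $(\phi_i\circ\psi)_*X_N$ is defined on $\phi_i(\psi(\BN))$; since $X_N$ equals $\partial_z$ near $\partial\BN$, has no $\partial_{y_j}$ components, and $\psi$ is the identity in $y$ and preserves the vertical direction, it agrees with $X$ near the boundary of that region and stays tangent to $\SF_{\mathcal M}$. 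Replacing $X$ by it thus yields a smooth non--singular foliated vector field, and property~(i), transported by $\phi_i\circ\psi$, gives a homotopy from $X$ to it relative to the complement of $\phi_i(\psi(\BN))$, through non--singular vector fields tangent to $\SF_{\mathcal M}$. As each $U_i$ is disjoint from $\partial\mathcal M$, this homotopy is relative to $\partial\mathcal M$.

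The core step is to stack such plugs so that the trapped orbits together cover the whole cross--section $V_i^-$. For each $q\in V_i^-$ there should be an admissible embedding whose trapped region $\psi(T_N)$ contains $q$: one translates and rescales $T_N$ in the $N$--directions, using that $\psi$ is the identity in $y$ to retain the full transverse extent. By compactness of $V_i^-$, finitely many embeddings $\psi_{i,1},\dots,\psi_{i,k_i}$ cover it; rescaling in $z$ (allowed since $\psi$ preserves the vertical direction) I would arrange their images to be disjoint and stacked in the $z$--direction immediately above $V_i^-$ inside $U_i$. Let $X'$ be the result of replacing $X$ by the corresponding push--forward of $X_N$ on each image. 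By property~(ii), an orbit not trapped by a given plug exits it at the antipodal point with unchanged $(x,y)$ coordinates; hence a non--trapped orbit crosses the whole tower keeping its horizontal coordinate constant, and so meets every plug of the tower. Since the trapped regions cover $V_i^-$, every orbit entering $V_i^-$ is trapped by one of the $\psi_{i,j}$.

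It then remains to check that no closed orbit of $X'$ survives outside $A$, and here properties~(ii) and~(iii) enter. Outside the plugs $X'=X$. Let $\gamma$ be a closed orbit of $X'$. If $\gamma$ meets no plug then, the towers being placed across every $V_i^-$, it meets no $V_i$, so Proposition~\ref{prop:main1} puts it in $A$. If $\gamma$ lay inside a single plug it would be a closed orbit of $X_N$ in $\BN$, contradicting property~(iii); hence $\gamma$ enters and exits each plug it meets, and by property~(ii) every such passage preserves its horizontal coordinate. Replacing the plug segments of $\gamma$ by the straight vertical segments of $X$ produces a closed orbit $\tilde\gamma$ of $X$, which by Proposition~\ref{prop:main1} lies in $A$ or crosses some $V_i$. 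The latter is impossible, for then $\gamma$ would enter a plug at a point of $V_i^-$ and be trapped rather than return; thus $\tilde\gamma$, and with it $\gamma$, lies in $A$. Concatenating the finitely many homotopies, whose supports are disjoint, gives the homotopy from $X$ to $X'$.

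I expect the main obstacle to be the stacking step: verifying that finitely many trapped regions can be positioned to cover the compact cross--section $V_i^-$ in all directions --- in particular across the transverse $\mathbb{D}^l$ factor, where the constraint that $\psi$ be the identity in $y$ restricts the available freedom --- while keeping the plug images disjoint, and confirming rigorously that property~(ii) forces a non--trapped orbit to keep one horizontal coordinate throughout the whole tower, so that the reconstruction $\gamma\mapsto\tilde\gamma$ is well defined and falls under the control of Proposition~\ref{prop:main1}.
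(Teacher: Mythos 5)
Your overall scheme (insert plugs over the cross--sections $V_i^-$, use property (i.) to get the homotopy, properties (ii.) and (iii.) to rule out surviving closed orbits) matches the paper's, and your reconstruction argument $\gamma \mapsto \tilde\gamma$ is essentially the paper's reasoning spelled out in more detail. The genuine gap is exactly the point you flag at the end as ``the main obstacle'': covering $V_i^-$ in the transverse $\mathbb{D}^l$--direction. Property (iv.) only says that $T_N$ is a non--empty open subset of $\BN^-$; it does \emph{not} say that $T_N$ has full extent in the $y$--coordinates, and for the plugs actually constructed in this paper it does not (Wilson's plug traps the orbits through $\{z=-2,\ |r|\leq 1,\ |x|\leq 1/2,\ |y|\leq 1/2\}$, and the parametric Kuperberg plug traps $T_\BK \times \mathbb{D}^l_{1/2}$). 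So your assertion that taking $\psi$ to be the identity in $y$ lets the trapped region ``retain the full transverse extent'' rests on a false premise: it retains only the extent of $T_N$ itself. Since every embedding you allow is the identity in $y$, every plug in your tower traps only orbits whose $y$--coordinate lies in the fixed set $\mathrm{pr}_y(T_N)$, and stacking in $z$ or translating in the $N$--directions never enlarges this set. An orbit entering $V_i^-$ at a $y$--value outside $\mathrm{pr}_y(T_N)$ therefore traverses the whole tower untouched (exiting each plug at the antipodal point, by property (ii.)), and a closed orbit of $X$ through such a point survives as a closed orbit of $X'$, possibly far from $A$. The towers, as described, do not prove the proposition.

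The missing idea, which is how the paper's proof proceeds, is that ``identity in the $y$ coordinates'' is only a normalization and may be relaxed: what the construction really needs is that the embedding carry each slice $[-2,2]\times N \times \{y_0\}$ of $\BN$ into a single slice $\{y=\mathrm{const}\}$ of the flowbox, because then the pushforward of $X_N$ (which has no $\partial_{y_j}$ components) is still tangent to $\SF_{\mathcal M}$, and the homotopy of property (i.) still takes place through non--vanishing foliated fields. With this relaxation one may rescale in the $y$--directions as well as in the $N$--directions (equivalently: reparametrize the foliated flowbox $U_i$ so that the compact set $V_i$, whose lateral boundary lies in $\overset{\circ}{U}_i$, has $y$--extent inside $\mathrm{pr}_y(T_N)$), and hence blow up the open set $T_N$ until $\psi_i(T_N) \supset V_i^-$. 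This is what the paper means by ``modifying $\psi_i$'': a \emph{single} plug per pair $(U_i,V_i)$ then traps every orbit crossing $V_i^-$, and your towers become unnecessary. Without some such relaxation (or reparametrization of $\phi_i$), no arrangement of identity--in--$y$ plugs can work, since within the fixed flowbox the $y$--coordinate is constant along orbits.
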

\begin{proof}
Given the finite cover by nice pairs $(U_i,V_i)$ as in Proposition \ref{prop:main1}, there are embeddings $\psi_i: \BN \to U_i$ satisfying that $\psi_i^* X = \partial_z$ and $V_i^- \subset \psi_i(T_N)$. This follows from the fact that $T_N$ contains a non--empty open subset by Property (iv.), which can be assumed to contain $V_i^-$ after possibly modifying $\psi_i$.

Within $\BN$, we homotope $\partial_z$ to $X_N$ as in Property (i.). Every trajectory of $X$ is either fully contained in $A$ or it intersects one $V_i^-$. In the latter case, after the homotopy the new vector field $X'$ has all its positive trajectories trapped in a plug and cannot thus be closed. Since no new closed trajectories have been introduced in the plugs by Property (iii.), the claim follows. 
\end{proof}

As soon as the existence of such a plug $\BN$ is proven for $n \geq 3$, Theorem \ref{thm:main} is an easy corollary.

\begin{proof}[Proof of Theorem \ref{thm:main} (assuming the existence of a suitable plug)]
Using the homotopy exact sequence for inclusions, the theorem is equivalent to showing that 
\[ \pi_j(\FX_{ns}(M, \SF), \FX_{no}(M, \SF)) = 0 \quad \text{ for all } j \in \mathbb{Z}. \]
Let $X_t$, $t \in \mathbb{D}^j$, be a $j$--parametric family of non--vanishing vector fields tangent to $\SF$, defining an element in $\pi_j(\FX_{ns}(M, \SF), \FX_{no}(M, \SF))$. What has to be proven now is that this family can be homotoped, leaving those $X_t$, $t \in \mathbb{S}^{j-1}$, fixed, to a family fully contained in $\FX_{no}(M, \SF)$.

Consider the manifold $\mathcal{M} = M \times \mathbb{D}^j$ with the foliation $\SF_{\mathcal M} = \coprod_{t_0 \in \mathbb{D}^j} \SF \times \{t_0\}$ of codimension $m+j$. Then $X_t$ can be regarded as a vector field $X$ in $\mathcal{M}$ tangent to $\SF_{\mathcal M}$. Since $X_t$ is an element in the relative homotopy group $\pi_j(\FX_{ns}(M, \SF), \FX_{no}(M, \SF))$, we can assume that $X$ has no closed orbits in a neighborhood $A$ of $\partial \mathcal M = M \times \mathbb{S}^{j-1}$. Then an application of Proposition \ref{prop:main2} readily implies that $X$ can be homotoped, relative to $\partial\mathcal M$ and through non--vanishing vector fields tangent to $\SF_{\mathcal M}$, to a vector field $X'$ with no closed orbits.

Equivalently, the family $X_t$ of vector fields can be homotoped, relative to the boundary of $\mathbb{D}^j$, to a family $X_t'$ fully contained in $\FX_{no}(M, \SF)$, thus proving the claim. 
\end{proof}

\section{Construction of the parametric plugs} \label{sec:plugs}

In this section we describe the parametric versions of Wilson's plug (which is needed for Theorem \ref{thm:main} if $n \geq 4$) and Kuperberg's plug (for the case $n=3)$. Note that Kuperberg's plug could be used also for the higher dimensional case, but Wilson's is easier to describe and paves the way to explain Kuperberg's.

\subsection{The Wilson Plug in dimensions 4 and higher} \label{ssec:wilson4}

Consider the manifold with boundary and corners $\BW^{n,l} = [-2,2] \times \mathbb{T}^2 \times [-2,2] \times \mathbb{D}^{n-4} \times \mathbb{D}^l$, with coordinates $(z;s,t;r;x_5,\dots,x_n;y_1,\dots,y_l)$, $s,t \in [0,\pi)$ , embedded in $\mathbb{R}^{n+l}$, $n \geq 4$ as follows:
\[ i: \BW^{n,l} \to \mathbb{R}^{n+l} \]
\[ i(z,s,t,r,x,y) = (z, \cos(s)(6+(3+r)\cos(t)), \sin(s)(6+(3+r)\cos(t)), (3+r)\sin(t),x,y). \]
Construct a vector field $X_W$ in $\BW^{n,l}$ as follows:
\[ X_W = f(z,r,x,y)(\partial_s + b\partial_t) + g(z,r,x,y)\partial_z, \]
with $b$ some irrational number and $f$, $g$ smooth functions satisfying the following constraints:
\begin{enumerate}
\item $g$ is symmetric and $f$ is antisymmetric in the $z$ coordinate,
\item $g(z,r,x,y) = 1$, $f(z,r,x,y) = 0$ close to the boundary of $\BW^{n,l}$,
\item $g(z,r,x,y) \geq 0$ everywhere and $g(z,r,x,y) = 0$ only in $\{|z| = 1, |r| \leq 1, |x| \leq 1/2, |y| \leq 1/2\}$,
\item $f(z,r,x,y) = 1$ in $\{z \in [-3/2,-1/2], |r| \leq 1, |x| \leq 1/2, |y| \leq 1/2 \}$. 
\end{enumerate}

This is the usual construction for Wilson's plug, but we have explicitely split the additional coordinates into $(x_i)_{i=5,\dots,n}$ and $(y_j)_{j=1,\dots,l}$, so that the $y$ coordinates denote the parameter space. Write $\BW^{n,l}_{y_0}$ for the $n$--dimensional plug one obtains for $y=y_0$ fixed. 

\begin{proposition} $($Wilson \cite{Wil}$)$
Wilson's plug satisfies all $4$ properties required for Proposition \ref{prop:main2} to hold.
\end{proposition}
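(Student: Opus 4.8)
The plan is to exploit the single most important structural feature of $X_W$: since it has no $\partial_r$, no $\partial_{x_i}$ and no $\partial_{y_j}$ components, the functions $r$, $x$ and $y$ are all first integrals of the flow. Consequently every orbit is confined to a fibre $\{r=r_0,\ x=x_0,\ y=y_0\}$, which is a copy of $[-2,2]_z\times\mathbb{T}^2_{s,t}$, and on such a fibre the field reduces to $X_W=f(z)(\partial_s+b\partial_t)+g(z)\partial_z$ (writing $f(z),g(z)$ for the restrictions). I would then split the analysis according to whether the fibre meets the zero set of $g$. Call a fibre \emph{special} if $(r_0,x_0,y_0)$ lies in $\{|r|\le1,\ |x|\le 1/2,\ |y|\le 1/2\}$; by constraint (3) these are exactly the fibres on which $g$ vanishes, and it does so only at $z=\pm1$.

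For Properties (iii) and (iv), which form the dynamical heart of the statement, I would argue fibre by fibre. On a non--special fibre $g>0$ throughout $[-2,2]$, so $\dot z>0$ and $z$ is strictly increasing: every orbit runs monotonically from $\BW^-$ to $\BW^+$, leaving no room for a closed or a trapped orbit. On a special fibre the loci $\{z=\pm1\}$ are invariant $2$--tori on which, by constraints (1) and (4), $X_W=\pm(\partial_s+b\partial_t)$; since $b$ is irrational this is a Kronecker flow with no periodic orbit. Away from $z=\pm1$ one still has $\dot z\neq0$, so $z$ is monotone there and cannot close up. Together these give Property (iii). For Property (iv) I would follow an orbit entering a special fibre at $z=-2$: it has $\dot z=g(z)>0$ and climbs towards $z=-1$, but because $g\ge0$ vanishes at $z=-1$ the transit time $\int dz/g(z)$ diverges, so the orbit asymptotes to the invariant torus without ever reaching it and is therefore trapped. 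As this happens for every $(s_0,t_0)$ and for every $(r_0,x_0,y_0)$ in the interior of the special set, the trapped set contains the open set $T_N=\{z=-2,\ |r|<1,\ |x|<1/2,\ |y|<1/2\}$.

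Property (ii) concerns only orbits that both enter and exit, and by the previous paragraph these live on non--special fibres, where $g>0$. On such an orbit $r,x,y$ are constant and, using $ds/dz=f(z)/g(z)$, the total increments are $\Delta s=\int_{-2}^{2} f(z)/g(z)\,dz$ and $\Delta t=b\,\Delta s$. Here the symmetry hypotheses in constraint (1) are decisive: $f$ is odd and $g$ even in $z$, so the integrand is odd and $\Delta s=\Delta t=0$. Hence the orbit exits at $(2,s_0,t_0,r_0,x_0,y_0)$, the point opposite to its entry $(-2,s_0,t_0,r_0,x_0,y_0)$, which is exactly Property (ii). For Property (i) I would use the straight--line homotopy $X_\lambda=\lambda f(\partial_s+b\partial_t)+\big((1-\lambda)+\lambda g\big)\partial_z$, $\lambda\in[0,1]$, from $\partial_z$ to $X_W$. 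It has no $\partial_{y_j}$ component and, by constraint (2), equals $\partial_z$ near $\partial\BW$, so it is relative to the boundary. It is non--vanishing because the $\partial_z$--coefficient can vanish only when $\lambda=1$ and $g=0$, i.e. on the tori $z=\pm1$ of a special fibre, and there $|f|=1$ keeps the $\partial_s$--component alive.

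The step I expect to be the genuine obstacle is the combination of (iii) and (iv): one must simultaneously trap an open set of orbits and yet create no new closed orbit. Everything hinges on the invariant tori $\{z=\pm1\}$, where the trapping forces an asymptotic limit set and the irrationality of $b$ is precisely what prevents that limit set from containing a periodic orbit; verifying that the transit--time integral diverges (so that ``asymptote'' really means ``trapped for infinite time'') and that no orbit can close up across the two tori is where the care is needed. The remaining Properties (i) and (ii) are then short, resting respectively on the explicit interpolation and on the parity of $f$ and $g$.
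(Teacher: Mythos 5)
Your proof is correct and takes essentially the same approach as the paper: Property (i) via linear interpolation (the paper interpolates $g\to 1$ and then $f\to 0$ in two stages, but your one--stage version works equally well since $|f|=1$ on the zero set of $g$), Property (ii) via the parity of $f$ and $g$ in $z$, Property (iii) by confining any closed orbit to the zero set of $g$, where the flow is a Kronecker flow of irrational slope on invariant tori, and Property (iv) by trapping the orbits entering over the region where $g$ vanishes. Your fibrewise reduction and transit--time estimate merely spell out details that the paper leaves implicit.
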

\begin{proof}
Property (i.) follows by interpolating linearly between $g$ and the constant function $1$ and then between $f$ and the constant function $0$. The symmetry of $g$ and the antisymmetry of $f$ imply Property (ii.). The only possible closed orbits within $\BW^{n,l}$ would lie in the zero set of $g$, and by construction the flow in the zero set consists of invariant tori in which the vector field has irrational slope, so Property (iii.) follows. Finally, the orbits touching $\{z= -2, |r| \leq 1, |x| \leq 1/2, |y| \leq 1/2\}$ are trapped, proving Property (iv.). 
\end{proof}

\subsection{The Wilson plug in dimension 3} \label{ssec:wilson3}

It is clear from the construction above that Wilson's method cannot be used in dimension $3$. However, a 3--dimensional version can be constructed. This object will be used later on when defining Kuperberg's plug. The treatment here follows very closely the one in \cite{HR}, where everything is described in more detail. \

Consider the manifold $\BW = [-2,2] \times \mathbb{S}^1 \times [1,3]$, with coordinates $(z, \theta, r)$, embedded in $\mathbb{R}^3$ cylindrically  in the obvious fashion. Define a vector field $X_S$ in $\BW$ as follows:
\[ X_W = f(z,r)\partial_\theta + g(z,r)\partial_z, \]
with the functions $f$ and $g$ satisfying:
\begin{itemize}
\item $f$ is antisymmetric and $g$ is symmetric in the $z$ coordinate,
\item $f$ is $0$ and $g$ is $1$ near the boundary of $\BW$,
\item $g(z,r) \geq 0$ and $g(z,r) = 0$ only in $B = \{|z| = 1, r=2\}$,
\item $f(z,r) \geq 0$ in $\{|z| > 0\}$ and $f(z,r) = 1$ in $\{1/4 \leq z \leq 7/4, 5/4 \leq r \leq 11/4 \}$.
\end{itemize}

This version of Wilson's plug satisfies Properties (i.) and (ii.), as is easily verified. Further, it contains a pair of closed orbits, namely, $\{|z| = 1, r=2\}$ and a closed set of orbits that get trapped, those touching $\{z= -2, r = 2\}$. Observe that the flow of $X_W$ is tangent to the cylinders with $r = r_0$ fixed.

\subsection{The Kuperberg Plug} \label{ssec:kuperberg}

The plug $\BW$ described above is the basis for Kuperberg's plug. $\BW$ can be inmmersed in $\mathbb{R}^3$ in a non--standard manner, by inserting it into itself in order to eliminate the two periodic orbits $\gamma_i$, $i=1,2$, that it has. See \cite{Kup} for the original article and \cite{HR} for a very detailed account of the construction. 

The key objects are as follows. There are two disjoint areas $L_i \subset \mathbb{S}^1 \times [1,3]$, $i=1,2$, and corresponding cylinders $D_i = [-2,2] \times L_i$ that are going to be reinserted into the plug. Let $\mathcal{D}_i$, $i=1,2$, be two disjoint flowboxes satisfying:
\begin{itemize}
\item each $\mathcal{D}_i$ contains an interval $\{((-1)^i,\theta,2)$, $\theta^{-}_i \leq \theta \leq \theta^{+}_i\}$, of one of the two closed orbits $\gamma_i$,
\item each $\mathcal{D}_i$ is diffeomorphic, as a manifold with boundary and corners, to $D_i$ by a map $\sigma_i: D_i \to \mathcal{D}_i$ satisfying $\sigma_i^* X_W = \partial_z$. Denote $\SL_i^{\pm} = \sigma_i(\{\pm 2\} \times L_i )$, for $i=1,2$. 
\end{itemize}
Both $D_i$ and $\mathcal{D}_i$ have part of their boundaries contained in $\partial \BW$. These properties imply that the identification $\sigma_i$ can be realised by an immersion with self--intersections of $\BW$ into $\mathbb{R}^3$ that allows the flow $X_W$ in $\mathcal{D}_i$ to be replaced by $(\sigma_i)_* X_W$. See Fig.~\ref{fig:kuperberg} for a picture of all these elements. 

Denote $\{(z,\theta_i,r_0)$, $z \in [-2,2]\} \subset D_i$ the vertical interval that is the preimage of $\gamma_i \cap \mathcal{D}_i$ under $\sigma_i$. Then we further require for the following property to hold:

\begin{itemize}
\item \textsl{Radius inequality:} ``for all $(z,\theta,r) \in D_i$, with image $\sigma_i(z,\theta,r) = (z',\theta', r') \in \mathcal{D}_i$, it holds that $r' < r$ except for the points $(z,\theta_i,2)$, where it is actually an equality."
\end{itemize}

The quotient manifold constructed by identifying in $\BW$ the cylinders $D_i$ and $\mathcal{D}_i$ using $\sigma_i$ will be denoted $\BK$, see Fig.~\ref{fig:kuperberg}. The quotient vector field obtained out of $X_W$ by replacing it with $(\sigma_i)_* X_W$ in $\mathcal{D}_i$ will be denoted $X_K$.

\begin{figure}[ht]
\centering
\includegraphics[scale=0.5]{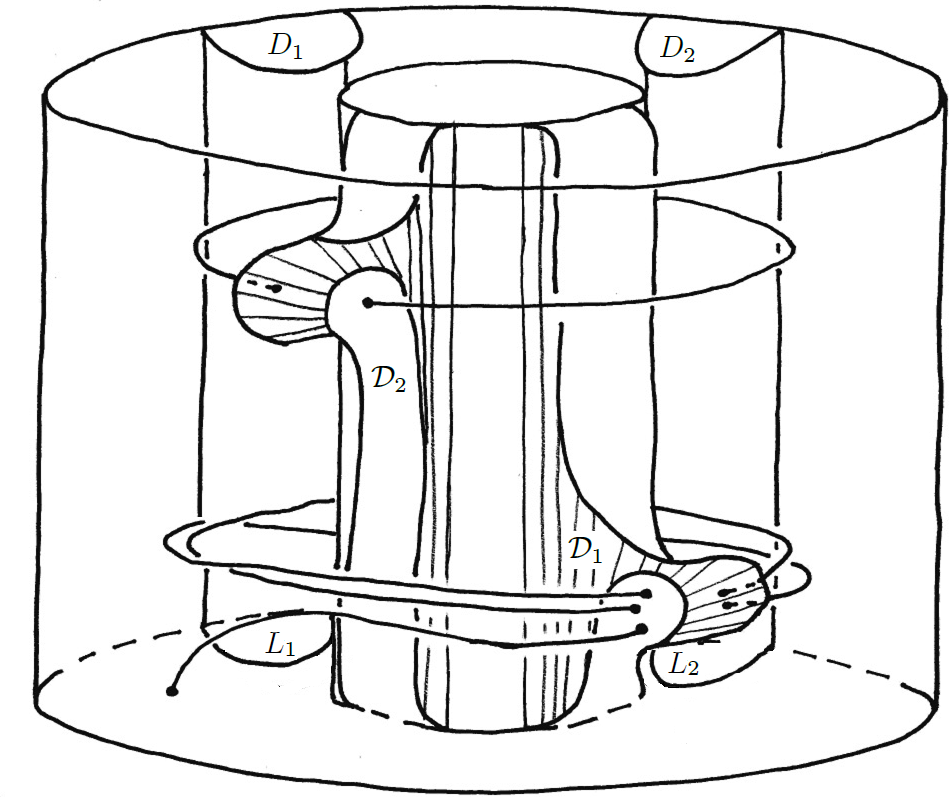}
\caption{The Kuperberg manifold seen as a quotient of the Wilson cylinder. $D_i$ is identified with $\mathcal{D}_i$, $i=1,2$. This figure originally appears in \cite{Ghy} and \cite{HR}.}\label{fig:kuperberg}
\end{figure}

The following theorem of Matsumoto shows that Property (iv.) of plugs is satisfied by Kuperberg's plug.

\begin{theorem} $($\cite{Mat}$)$ \label{thm:Matsumoto}
There is $\delta > 0$ such that every orbit entering the Kuperberg plug at $\{\pm 2\} \times \mathbb{S}^1 \times (2-\delta,2)$ is trapped inside.
\end{theorem}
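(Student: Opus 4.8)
The plan is to analyze the dynamics of the Kuperberg flow $X_K$ near the two reinserted cylinders and show that orbits entering in a thin annular neighborhood of radius $2$ can never escape. The starting point is the observation, true already in the Wilson plug $\BW$, that the flow preserves the radial coordinate $r$: the only way an orbit can change its radius is by passing through one of the insertion regions $\mathcal{D}_i$, where $X_K$ is replaced by $(\sigma_i)_* X_W$. By the \emph{radius inequality}, every time an orbit traverses $\mathcal{D}_i$ its radius \emph{strictly decreases}, with equality only along the special vertical interval that maps to the closed orbit $\gamma_i$. Thus the radius is a nonincreasing quantity along any trajectory of $X_K$, and I would take the monotonicity of $r$ as the central structural feature driving the entire argument.

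First I would set up the return dynamics. Fix a small $\delta>0$ and consider an orbit entering at a point of $\{\pm 2\}\times \mathbb{S}^1 \times (2-\delta,2)$. In the underlying Wilson plug, orbits entering at radius $r_0$ near $2$ but with $r_0\neq 2$ would simply pass through (since the vanishing set of $g$ sits only at $r=2$), so the trapping must come entirely from the reinsertions forcing such orbits to repeatedly visit the $\mathcal{D}_i$ and spiral inward toward $r=2$. I would track an orbit through its successive passages: each passage through an insertion strictly lowers $r$, while the Wilson flow between passages keeps $r$ constant. The goal is to show that the accumulated radius decrease pushes the orbit toward the level $r=2$, where $g$ vanishes and the flow slows to a halt, so the orbit spends infinite time without ever reaching an exit face $\BW^+$ or $\BW^-$ at the ambient boundary.

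The key step, and the main obstacle, is ruling out the possibility that an orbit escapes through one of the exit faces $\SL_i^{\pm}$ of the inserted cylinders before being trapped. Kuperberg's original insertion is engineered precisely so that the two periodic orbits $\gamma_i$ are destroyed, but one must verify that the destruction does not open an escape route for nearby orbits. I expect the argument to proceed by contradiction: suppose an orbit entering at radius $r_0\in(2-\delta,2)$ eventually exits the plug. Following Matsumoto's analysis, one compares the radius at entry and at exit and uses the strict radius inequality to derive that an escaping orbit would have to cross the insertion an infinite number of times (forcing $r$ strictly below any escape threshold), or else would have to coincide with the special interval mapping to $\gamma_i$, which has radius exactly $2$ and is itself not an escaping orbit. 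Making this dichotomy precise — carefully controlling how the insertion maps the radial coordinate and showing the two cases are exhaustive for $r_0$ within $\delta$ of $2$ — is where the real work lies, and it is exactly the content established in \cite{Mat}. I would therefore cite Matsumoto's radius inequality estimates to quantify the per-passage radius drop and conclude that a uniform $\delta$ works for all entry points in the annulus $(2-\delta,2)$.
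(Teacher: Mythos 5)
There is a genuine gap, and it lies in what you call the ``central structural feature'': the claim that the radius is nonincreasing along trajectories of $X_K$, strictly decreasing at each passage through an insertion, is false. The radius inequality compares the two representatives of a single point of $\BK$ --- a point of $D_i$ and its image under $\sigma_i$ in $\mathcal{D}_i$ --- not the radius at consecutive times along an orbit. Traced through the quotient construction, the dynamics are the opposite of what you describe: when a Kuperberg orbit crosses the secondary entry face (the image of $\SL_i^-$ in $\BK$), it continues from the $D_i$--representative of that point, whose radius is \emph{strictly larger} than that of the $\mathcal{D}_i$--representative; and when the orbit later exits the insertion, since $\sigma_i$ conjugates $\partial_z$ to $X_W$ and the Wilson flow preserves radius, it emerges at \emph{exactly} the radius it had before entering. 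So the radius is restored after every completed passage; it is not monotone in any sense. This non-monotonicity is precisely why the absence of closed orbits in $\BK$ is delicate and why Kuperberg, Matsumoto and Hurder--Rechtman \cite{HR} must introduce the level-function bookkeeping --- compare the proof of Lemma \ref{lem:triviality} in this paper, where the two lifts $\tau$, $\tau'$ give the two radius functions $\rho$ and $r$, and the argument runs on the level function $\nu_p$ rather than on any monotonicity of radius.

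Even granting your monotonicity claim, the mechanism you propose points the wrong way: orbits enter at radius $r_0 \in (2-\delta,2)$, \emph{below} $2$, so a decreasing radius would push them away from the critical set $\{|z|=1,\, r=2\}$ where $g$ vanishes, i.e.\ into the region where the Wilson flow traverses the plug in bounded time --- they would escape, contradicting rather than proving the theorem. The actual trapping mechanism is the reverse: an orbit entering near radius $2$ stalls near $\{z=-1\}$, spirals in $\theta$, hits a secondary entry face near the special angle $\theta_i$, and thereby spawns a new Wilson arc at radius \emph{closer} to $2$ that restarts at $z=-2$; for entry radius sufficiently close to $2$ this recursion never terminates, and that is what Matsumoto makes quantitative. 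Note finally that the paper itself offers no proof of this statement --- it is quoted directly from \cite{Mat} --- so your closing appeal to Matsumoto's estimates is consistent with the paper's treatment; but the sketch you build around that citation could not stand as a proof outline, because its central claim fails.
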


The following Lemma will be useful in the next subsection.

\begin{lemma} \label{lem:paramWilson}
There is a homotopy in {\em$\BW$} of non--singular vector fields $X_W^t$, $t \in [0,2]$, with $X_W^0 = X_W$ and $X_W^1 = \partial_z$, such that:
\begin{itemize}
\item $X_W^t$ agrees with $\partial_z$ in $D_i$ for $t \in [1,2]$,
\item $X_W^t$ agrees with $X_W$ in $\mathcal{D}_i$ for $t \in [0,1]$,
\item $X_W^t$ defines a plug with no closed nor trapped orbits for $t > 0$.
\end{itemize}
\end{lemma}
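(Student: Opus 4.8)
The plan is to realise the homotopy as a path in the space of defining data $(f,g)$, split into two stages matched to the two constraint regions. Throughout I keep the field of the form $\widetilde f\,\partial_\theta+\widetilde g\,\partial_z$; since it has no $\partial_{y}$--components (there are none here) and will equal $\partial_z$ near $\partial\BW$, every stage is a plug in the required sense, and $r$ is preserved along orbits. First I would fix two smooth auxiliary functions $f_1,g_1$ on $\BW$ with: $g_1=g$, $f_1=f$ on $\mathcal{D}_1\cup\mathcal{D}_2$; $g_1=1$, $f_1=0$ on $D_1\cup D_2$ and near $\partial\BW$; and, crucially, $g_1\geq 0$ with zero set exactly the two closed arcs $\overline{\gamma_i\cap\mathcal{D}_i}$. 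Because the $D_i,\mathcal{D}_i$ are pairwise disjoint and the only forced zeros of $g$ are the circles $\gamma_i$, such $(f_1,g_1)$ exist by a partition--of--unity argument: one pushes $g$ up to strictly positive values everywhere off $\mathcal{D}_1\cup\mathcal{D}_2$ -- in particular along the complementary arcs $\gamma_i\setminus\mathcal{D}_i$ and throughout $D_i$ -- while leaving it untouched on the $\mathcal{D}_i$.

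With these fixed I would set, for $t\in[0,1]$,
\[ X_W^t=\big((1-t)f+tf_1\big)\partial_\theta+\big((1-t)g+tg_1\big)\partial_z, \]
and, for $t\in[1,2]$,
\[ X_W^t=(2-t)f_1\,\partial_\theta+\big((2-t)g_1+(t-1)\big)\partial_z, \]
so that $X_W^0=X_W$, $X_W^2=\partial_z$, and the two formulas agree at $t=1$. By construction $X_W^t=X_W$ on the $\mathcal{D}_i$ for $t\in[0,1]$, $X_W^t=\partial_z$ on the $D_i$ for $t\in[1,2]$, and $X_W^t=\partial_z$ near $\partial\BW$ for all $t$. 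Non--singularity is immediate: the $\partial_z$--component is a convex combination of the nonnegative functions $g,g_1$ (and $1$), so it vanishes only where $g=g_1=0$, i.e.\ on the arcs $\overline{\gamma_i\cap\mathcal{D}_i}$, where instead the $\partial_\theta$--component equals $f\neq 0$.

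The heart of the matter, and the step I expect to be the main obstacle, is the third bullet: absence of closed and trapped orbits for $t>0$. The difficulty is that the constraint $X_W^t=X_W$ on $\mathcal{D}_i$ forces $\widetilde g$ to keep vanishing along the arcs $\gamma_i\cap\mathcal{D}_i$, so the periodic orbits are only partially broken and one must rule out re--trapping near them. I would argue with the $z$--coordinate: along any orbit $dz/d\tau=\widetilde g\geq 0$, so $z$ is non--decreasing, and since $\widetilde g\equiv 1$ near $z=\pm2$ any orbit whose $z$--coordinate approaches $2$ leaves the plug in finite time. If some forward orbit were trapped, then $z$ would increase to a limit $z_\infty<2$ and its $\omega$--limit set $\omega$ would be a non--empty compact invariant subset of $\{z=z_\infty\}$, on which $\widetilde g\equiv0$; hence $\omega$ lies in the zero set $Z$ of $\widetilde g$. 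For $t\in(0,1]$ one has $Z=\overline{\gamma_1\cap\mathcal{D}_1}\cup\overline{\gamma_2\cap\mathcal{D}_2}$, and each such arc is a flow segment of $X_W^t=X_W$ inside the flowbox $\mathcal{D}_i$: by $\sigma_i^*X_W=\partial_z$ the forward orbit of any of its points reaches $\SL_i^+$ in finite time and then enters $\gamma_i\setminus\mathcal{D}_i$, where $g_1>0$ and so $z$ strictly increases. Thus no point of $Z$ keeps its forward orbit inside $Z$, so $Z$ supports no non--empty invariant set -- contradicting the existence of $\omega$ -- and the same observation forbids closed orbits, which would have constant $z$ and therefore lie in $Z$. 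For $t\in(1,2]$ the zero set of $\widetilde g$ is empty and the claim is immediate. The remaining points, left to the routine construction of $(f_1,g_1)$, are the smooth matching of the three prescriptions across $\partial D_i$ and $\partial\mathcal{D}_i$ and the positivity of $g_1$ on $\gamma_i\setminus\mathcal{D}_i$, which is exactly what prevents the broken orbits from being re--trapped.
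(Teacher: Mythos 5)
Your construction is correct as a proof of the lemma as literally stated, and strategically it is the same proof as the paper's: a two--stage homotopy of the coefficient pair $(f,g)$, first made vertical over $D_i$ while frozen on $\mathcal{D}_i$, then killed everywhere; non--singularity comes from convexity of non--negative $z$--components, and the third bullet from the fact that the residual zero set of the $z$--component is a union of arcs that every forward orbit leaves in finite time, hence supports no invariant set. (Your $\omega$--limit argument is in fact more detailed than the paper's one--line justification, which only observes that $g_t>C_t>0$ on $I_i$ for $t>0$.) The one substantive difference is the shape of the cutoff. The paper interpolates via $f_t=f\,\bigl(1-\phi(t)\alpha(\theta)-\psi(t)(1-\alpha(\theta))\bigr)$ and $g_t=g+(1-g)\bigl(\phi(t)\alpha(\theta)+\psi(t)(1-\alpha(\theta))\bigr)$ with $\alpha$ a bump function of $\theta$ alone; since $f$ is odd and $g$ even in $z$ and the cutoff is $z$--independent, every intermediate $X_W^t$ retains Wilson's mirror symmetry, and therefore the matched--ends property (ii): any orbit crossing the plug enters and exits at opposing points. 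Your partition--of--unity pair $(f_1,g_1)$ carries no such symmetry, so your intermediate fields will in general carry an orbit entering at $(-2,\theta_0,r_0)$ to an exit $(2,\theta_1,r_0)$ with $\theta_1\neq\theta_0$. This is compatible with the paper's formal definition of a plug (verticality near $\partial\BW$), so your bullets are all established; but the matched--ends property of these intermediate Wilson fields is exactly what the paper uses downstream -- the level--function argument in the proof of Lemma~\ref{lem:triviality} asserts that Wilson orbits crossing $\BW$ from bottom to top must connect opposing boundary points, and this feeds into property (ii) of the parametric Kuperberg plug. To make your construction a drop--in replacement, perform the cutoff in the $\theta$--variable only, as the paper does (this enlarges the zero set to a $z$--symmetric union of arcs, to which your dynamical argument applies verbatim); imposing oddness of $f_1$ and evenness of $g_1$ in $z$ directly may be incompatible with your requirement that the zero set be exactly $\overline{\gamma_i\cap\mathcal{D}_i}$ unless the regions $\mathcal{D}_i$ happen to be mirror--symmetric. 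A last, minor point: reparametrize near $t=1$ so the concatenated family is smooth in $t$, which matters when it is later composed with $\eta_s(|y|)$ to define the foliated family $\mathcal{X}_K^s$.
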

\begin{proof}
Let $f$ and $g$ be the defining functions for $X_W = f(z,r)\partial_\theta + g(z,r) \partial_z$. Fix disjoint open subintervals of the circle $I_i, \mathcal{I}_i \subset \mathbb{S}^1$, $i=1,2$, such that $D_i \subset [-2,2] \times I_i \times [1,3]$ and $\mathcal{D}_i \subset [-2,2] \times \mathcal{I}_i \times [1,3]$. Fix slightly larger intervals $I_i', \mathcal{I}_i'$, still disjoint, such that $I_i \subset I_i'$ and $\mathcal{I}_i \subset \mathcal{I}_i'$. Construct bump functions 
\[ \alpha, \beta: \mathbb{S}^1 \to [0,1] \]
\[ \alpha(p) = 1, p \in I_i; \quad \alpha(p) = 0, p \notin I_i'; \quad i=1,2, \]
\[ \beta(p) = 1, p \in \mathcal{I}_i; \quad \beta(p) = 0, p \notin \mathcal{I}_i'; \quad i=1,2. \]

Let $\phi: [0,2] \to [0,1]$ be a smooth function that is increasing in $[0,1]$ and satisfies $\phi(0) = 0$ and $\phi(t) = 1$ for $t \in [1,2]$. Similarly, let $\psi: [0,2] \to [0,1]$ be a smooth function such that $\psi(t) = 0$ for $t \in [0,1]$ and $\psi(2) = 1$. Now define:
\[ f_t(z,\theta,r) = f(z,r)(1 - \phi(t)\alpha(\theta) - \psi(t)(1-\alpha(\theta))) \]
\[ g_t (z,\theta,r) = g(z,r) + (1-g(z,r))(\phi(t)\alpha(\theta) + \psi(t)(1-\alpha(\theta))) \]
\[ X_W^t = f_t\partial_\theta +  g_t\partial_z. \]

It is immediate that $X_W^t$ is non--singular and that the first two claims hold. For the last one, observe that $g_t > C_t > 0$ in $I_i$ for $t>0$, with $C_t$ some positive constant.
\end{proof} 

See Fig.~\ref{fig:homotopy} for a pictorial representation of this construction. 

\begin{figure}[ht]
\centering
\includegraphics[scale=0.33]{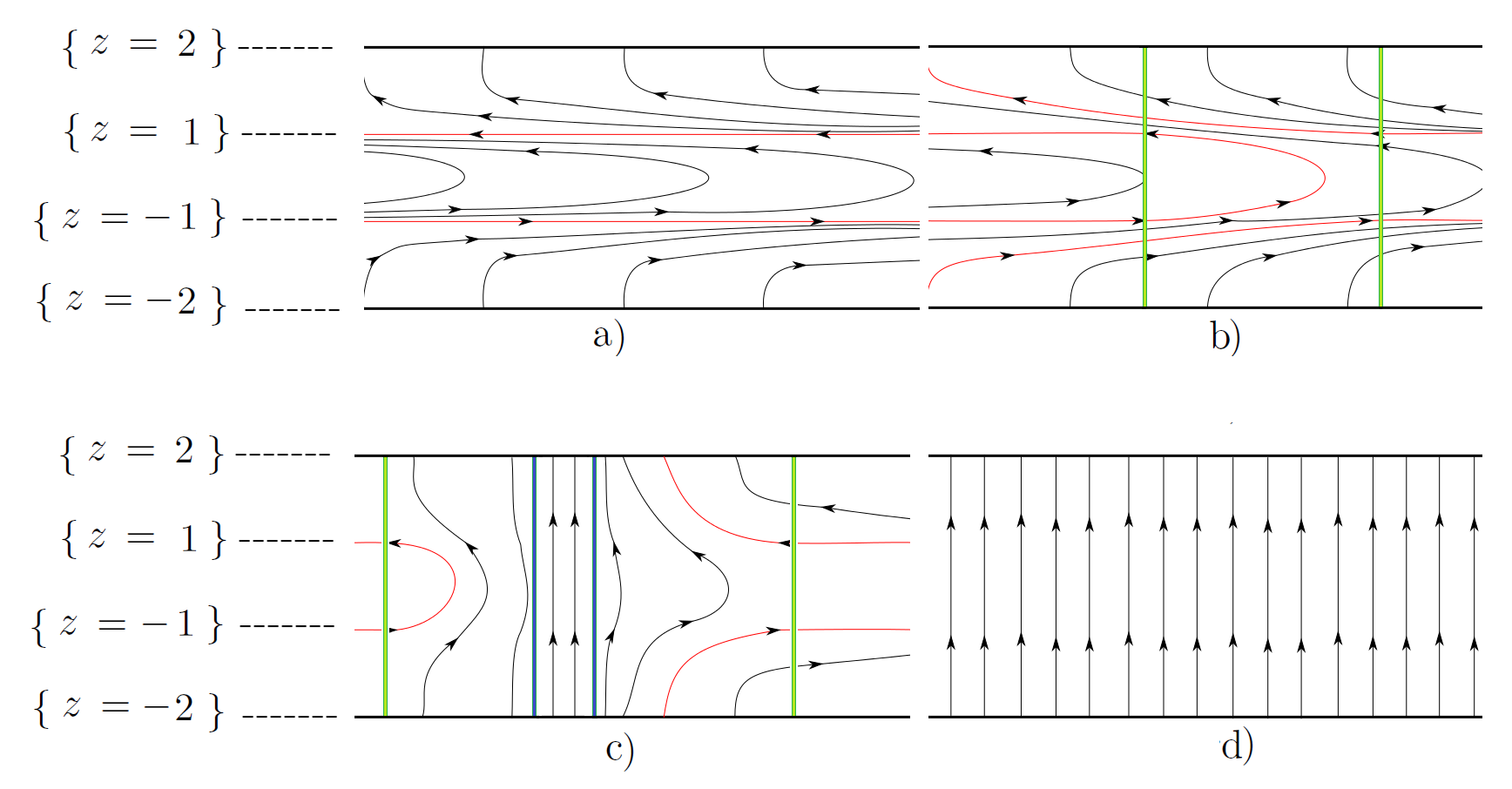}
\caption{The flow of $X_W^t$ at $\{r=2\}$. Image $a)$ corresponds to $t=0$, $b)$ to $t=1/2$, $c)$ to $t=1$, and $d)$ to $t=2$. The areas enclosed by the green lines correspond to the intervals $I_i'$, and the area enclosed by the blue lines to $I_i$. The red flow line corresponds to the orbit(s) that is(are) tangent to the curves $\{|z|=1\}$ outside of $I_i'$.}\label{fig:homotopy}
\end{figure}

\subsubsection{The parametric Kuperberg plug}

The \textsl{radius inequality} is the key to showing that the Kuperberg plug traps a non--empty open set of orbits and that it contains no closed orbits. Similarly, consider the following property:
\begin{itemize}
\item The \textsl{strict radius inequality} holds for a diffeomorphism $\phi_i: D_i \to \mathcal{D}_i$ if $r' < r$ for every $(z,\theta,r) \in D_i$ with $\phi_i(z,\theta,r) = (z',\theta', r')$.
\end{itemize}
In the process of interpolating to a trivial plug, we will need for the intermediate plugs to satisfy this strict radius inequality, since it will guarantee that all orbits enter and exit the plug. 

A family of diffeomorphisms 
\[ \sigma_i^t: D_i \to \mathcal{D}_i, \quad t \in [0,2], i=1,2; \quad \text{satisfying} \]
\[ \sigma_i^0 = \sigma_i; \quad (\sigma_i^t)^* X_W = \partial_z \]
and satisfying the strict radius inequality for $t > 0$ can be constructed easily. The diffeomorphisms $\sigma_i$ can be precomposed with diffeomorphisms of $D_i$ that preserve the $z$ component, that restrict to the identity in $[-2,2] \times (\partial L_i)$ and that, away from there, take points to points with smaller radius. This produces diffeomorphisms $\sigma_i^t$ that are $C^\infty$--close to $\sigma_i$. The quotients of $\BW$ induced by the gluings $\sigma_i^t$ are all diffeomorphic to the Kuperberg manifold $\BK$ and it is possible to fix a smooth $t$--parametric family of identifications with $\BK$, which we henceforth assume. 

Recall the explicit homotopy $X_W^t$ constructed in Lemma \ref{lem:paramWilson}. We define a family of vector fields in $\BW$ as follows:
\begin{itemize}
\item $Y_W^t = X_W^t$ in $(\BW \setminus (D_1 \cup D_2 \cup \mathcal{D}_1 \cup \mathcal{D}_2))$,
\item $Y_W^t = X_W^t$ in $D_i$ for $t \in [0,1]$,
\item $Y_W^t = (\sigma_i^t)_* X_W^{t}$ in $\mathcal{D}_i$ for $t \in [0,1]$,
\item $Y_W^t = X_W^t$ in $\mathcal{D}_i$ for $t \in [1,2]$,
\item $Y_W^t = (\sigma_i^t)^*X_W^t$ in $D_i$ for $t \in [1,2]$.
\end{itemize}	
Note that this vector field does not define a plug in $\BW$, since it is not vertical close to the boundary in $D_i$ for $t \in [1,2]$. However, it does descend to the quotient $\BK$ and automatically induces a family of plugs $(\BK, X_K^t)$, $t \in [0,2]$ interpolating from $X_K=X_K^0$ to $\partial_z=X_K^2$. 
 
\begin{lemma} \label{lem:triviality}
$(\BK, X_K^t$) has no closed orbits. Further, for $t>0$, all orbits enter and exit the plug at opposing points. 
\end{lemma}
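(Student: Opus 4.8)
The plan is to analyse each orbit of $X_K^t$ by lifting it to the Wilson cylinder $\BW$ and monitoring the radial coordinate $r$. Two features drive everything: the field $X_W^t = f_t\partial_\theta + g_t\partial_z$ has no $\partial_r$ component, so $r$ is constant along the Wilson flow; and passing through an insertion replaces a point of $D_i$ by its $\sigma_i^t$--image in $\mathcal D_i$, so $r$ changes there by exactly the amount dictated by the radius inequality. Accordingly I would track $r$ as a piecewise quantity along orbits --- constant on the arcs lying in the Wilson region $\BW_0 := \BW\setminus(D_1\cup D_2\cup\mathcal D_1\cup\mathcal D_2)$ and jumping at the insertion faces $\SL_i^\pm$ --- and treat the cases $t=0$ and $t>0$ separately, since the radius inequality is strict only for $t>0$.

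For $t>0$ the strict radius inequality makes the statement self--contained. Along any orbit the quantity $r$ is then non--increasing, being constant on Wilson arcs and strictly decreasing at every insertion crossing. A closed orbit would have to restore $r$ to its starting value, so it could cross no insertion; but an orbit crossing no insertion stays in $\BW_0$ and is an honest closed orbit of $X_W^t$, contradicting the final clause of Lemma \ref{lem:paramWilson}. Hence $X_K^t$ has no closed orbits for $t>0$.

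The trapping statement for $t>0$ I would reduce to the same contradiction. Given a hypothetical forward--trapped orbit, its $\omega$--limit set $\Omega\subset\BK$ is nonempty, compact and invariant, and since $X_K^t$ is vertical near $\partial\BK$ this set lies in the interior, away from the entry and exit faces. Passing to a minimal subset and a point $p$ in it, the orbit of $p$ is recurrent; but along a recurrent orbit the strictly monotone quantity $r$ cannot drop, so this orbit crosses no insertion and is therefore a complete orbit of $X_W^t$ confined to the interior of $\BW_0$ --- a trapped orbit of $X_W^t$, again contradicting Lemma \ref{lem:paramWilson}. Thus for $t>0$ every orbit leaves $\BK$; that it exits at the boundary point antipodal to its entry point is precisely plug property (ii), which the Kuperberg construction is built to satisfy, giving the ``opposing points'' clause.

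For $t=0$ the field $X_K^0$ is the classical Kuperberg field, whose lack of closed orbits is Kuperberg's theorem, so there I would simply invoke \cite{Kup} together with the detailed expositions \cite{Ghy,HR}; note that trapped orbits do occur at $t=0$ --- indeed Theorem \ref{thm:Matsumoto} produces an open family of them --- which is why the trapping clause is stated only for $t>0$. The main obstacle I anticipate is exactly this dichotomy. At $t=0$ the radius inequality degenerates to an equality along the former periodic orbits $\gamma_i$, and it is the careful treatment of this equality locus that constitutes the hard part of the classical argument, which I therefore do not reprove. For $t>0$ the delicate point is that $r$ is not a genuine function on the quotient $\BK$, since glued points of $D_i$ and $\mathcal D_i$ carry different radii; the monotonicity reasoning must consequently be carried out on $\BW_0$ and along lifts, and one must exclude an orbit trapped by infinitely many insertion crossings with radii accumulating from above --- which is exactly what the reduction to a minimal set in $\Omega$, where strict monotonicity forbids any crossing, is designed to achieve.
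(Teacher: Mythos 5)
Your handling of $t=0$ (invoking Kuperberg's theorem) matches the paper, but the engine of your argument for $t>0$ is a claim that is simply false: the radius is \emph{not} monotone along orbits of $X_K^t$. The radius inequality concerns the map $\sigma_i^t: D_i \to \mathcal{D}_i$, and along an orbit this identification gets applied in \emph{both} directions. Concretely, when a forward orbit travelling in $\BW \setminus (\mathcal{D}_1 \cup \mathcal{D}_2)$ at radius $\rho_0$ reaches the entry face $\SL_i^-$, the quotient identifies that point with a point of the bottom face $\{-2\} \times L_i$ of $D_i$; that is, the lift applies $(\sigma_i^t)^{-1}$, and the lifted radius \emph{jumps up} by at least $\epsilon$ (this is exactly the content of the paper's bound $\rho - r \geq \epsilon$, where $\rho$ is the $D_i$--side radius and $r$ the $\mathcal{D}_i$--side radius). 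Only when the orbit leaves through $\SL_i^+$ does the lift apply $\sigma_i^t$, and the radius jump down. So along an orbit the radius oscillates --- up at entries, down at exits --- and these crossings can moreover be nested, which is the whole difficulty of any Kuperberg-type argument. In fact your monotonicity claim contradicts the second assertion of the very lemma you are proving: for $t>0$ every orbit, including those that meet an insertion (such orbits exist, e.g.\ those entering the plug near radius $2$), exits at the point opposite its entry point, hence at the \emph{same} radius; that is incompatible with a quantity that is constant on arcs and strictly drops at each crossing. The same false monotonicity powers your $\omega$--limit/recurrence argument, so that collapses too (note that at $t=0$ recurrent orbits do exist and cross insertions infinitely often, so ``recurrent orbits cross no insertion'' cannot be a soft fact). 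Finally, your appeal to ``property (ii), which the Kuperberg construction is built to satisfy'' for the opposing-points clause is circular: that property for the deformed plugs $(\BK, X_K^t)$ is precisely part of what the lemma asserts.

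What replaces monotonicity in the paper is the level-function argument in Kuperberg's style: one lists the crossings of $\tau'(\SL_i^\pm)$ along the orbit, sets $\nu_p = \#\mathrm{entries} - \#\mathrm{exits}$, proves by induction that suitably matched entry/exit pairs (equal level, extremal among the crossings between them) cobound an arc of a single Wilson orbit and therefore occur at equal radii $\rho$, resp.\ $r$; the strict bound of Equation~(\ref{eq:radiusBound}) then caps the level, since each unit of nesting costs radius at least $\epsilon$ while the radius lives in $[1,3]$. A minimality argument shows the list of crossings is finite, so every orbit escapes in finite forward and backward time, and the same bookkeeping yields the opposing-points statement. Your instincts --- that the strict radius inequality is what makes $t>0$ tractable, and that the goal is a reduction to Lemma~\ref{lem:paramWilson} --- are right, but the reduction cannot be achieved by a one-step Lyapunov-type argument; the nesting analysis is unavoidable.
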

\begin{proof}
Since $X_K^0$ is Kuperberg's plug, it has no closed orbits. Let us set up some notation for the case $t > 0$. There are smooth bijective projections 
\[ \tau: \BW \setminus (\mathcal{D}_1 \cup \mathcal{D}_2) \to \BK, \]
\[ \tau': \BW \setminus (D_1 \cup D_2) \to \BK. \]
The discontinuous radius function $\rho: \BK \to [1,3]$ at a point $p$ is defined to be the radius of $\tau^{-1}(p)$. Similarly, $r(p)$ will be the radius of the preimage by $\tau'$. Compactness of $D_i$ and the \textsl{strict radius inequality}, imply that there is a lower bound 
\begin{equation} \label{eq:radiusBound}
\rho - r \geq \epsilon > 0 
\end{equation}
in the points where they disagree. 

Fix a point $p \in \BK$. Let $\Phi_s(p)$ be the flow of $X_K^t$ at time $s$ valued at $p$. We can denote, for $i=1,2$: 
\[ E_i(s_0) = \{ \Phi_s(p) \cap \tau'(\SL_i^-); s \in (0,s_0)\} \]
\[ S_i(s_0) = \{ \Phi_s(p) \cap \tau'(\SL_i^+); s \in (0,s_0)\} \]
the sets of points where the forward orbit of $p$ enters and exits, respectively, the self--insertion of the plug. Define $E_i = \cup_{s\geq 0} E_i(s)$ and $S_i = \cup_{s \geq 0} S_i(s)$. Define the level function associated to $p$ as follows:
\[ \nu_p(s) = (\#E_1(s) + \#E_2(s)) - (\#S_1(s) + \#S_2(s)), \quad s \geq 0.  \]
Consider the collection of points $E_1 \cup E_2 \cup S_1 \cup S_2$ and regard it as an ordered list $L = \{x_j = \Phi^t_{s_j}(p)\}$ in terms of increasing $s_j$, so the points appear in $L$ as the forward orbit intersects the sets $\mathcal{L}^{\pm}_i$.

Consider two points $x = x_j$ and $y = x_k$, with $x$ an entry point. Take the list $\{x_i\}_{i \in \{j,\dots,k\}} \subset L$ of points lying inbetween. If $\nu_p(x) = \nu_p(y) \geq \nu_p(x_i)$, $i \in \{j+1,\dots, k-1\}$, then we claim that $\tau^{-1}x$ and $\tau^{-1}y$ lie in the same Wilson orbit. We proceed by induction on the size of $\{j,\dots,k\}$. The base case $k=j+1$ is immediate by construction. For a list of length $2n$, $n > 1$, there are two cases. 

If $x_{j+1}$ is an entry point, then the claim holds for the list of length $2n-2$ defined by $x_{j+1}$ and $x_{k-1}$. This implies that $\tau^{-1}x_{j+1}$ and $\tau^{-1}x_{k-1}$ are joined by a Wilson orbit going from the bottom boundary of $\BW$ to the top one, so in particular, $\tau^{-1}x_{j+1}$ and $\tau^{-1}x_{k-1}$ must be at opposing points of the boundary of $\BW$. Write $\gamma$ for the vertical segment joining them. Then the Wilson segment joining $x$ and $y$ is given by the concatenation of the segment joining $x$ and $x_{j+1}$, $\tau \circ \gamma$ and the segment joining $x_{k-1}$ and $y$. 

If $x_{j+1}$ is an exit point, and $j+2 = k-1$, then $x_{k-1}$ is an entry point and the Kuperberg orbit inbetween $x$ and $y$ agrees with the Wilson orbit, proving the claim. If $j+2 < k-1$, $x_{j+2}$ is an entry point and the induction step can be applied to the list $\{x_i\}_{i \in \{j+2,\dots,k-2\}}$. Again, the Kuperberg orbit between $x$ and $x_{j+2}$ agrees with the Wilson orbit and the same is true for $x_{k-2}$ and $y$. Pushing forward by $\tau$ the vertical segment joining $x_{j+2}$ and $x_{k-2}$ provided by the induction step provides the missing segment in the Wilson orbit joining the previous two. 

Observe that this means that $\rho(x) = \rho(y)$ and $r(x) = r(y)$, since the radius remains constant in Wilson orbits. This fact taken together with Equation \ref{eq:radiusBound} implies that the elements in the list $N = \{\nu_p(s_j)\}$ have an upper bound, because $\rho$ cannot be arbitrarily large. If $L$ is infinite, then there is a minimum number $k$ that gets repeated infinitely many times in $N$.

By minimality of $k$, we can choose $x$ and $y$ with $\nu_p(x) = \nu_p(y) = k$, all points inbetween with greater $\nu_p$, and defining an arbitrarily long list of points. But this is a contradiction, since this means that we can find Wilson segments that intersect $\SL^\pm_i$ arbitrarily many times. Therefore, $L$ must be finite and every orbit eventually escapes the plug. A similar analysis for negative time shows that it must enter the plug too. The level analysis above shows that it must do so at opposing points.
\end{proof}

Construct a smooth non--decreasing function $\eta: [0,1] \to [0,2]$, satisfying:
\begin{itemize}
\item $\eta$ is identically $0$ in $[0,1/2]$,
\item $\eta > 0$ in $(1/2,1]$,
\item $\eta$ is identically $2$ close to $1$.
\end{itemize}
Define a family of functions $\eta = (1-s)\eta + 2s$, $s \in [0,1]$. Let $\mathbb{D}^l$ be the disk with coordinates $(y_1,\dots,y_l)$. A $1$--parametric family of foliated vector fields $\mathcal{X}_K^s$ in $\BK \times \mathbb{D}^l$ can be defined by 
\[ (\mathcal{X}^s_K)|_{\{y=y_0\}} = X_K^{\eta_s(|y_0|)}. \]

\begin{proposition}
$(\BK, \mathcal{X}_K^0)$ satisfies all $4$ properties required for Proposition \ref{prop:main2} to hold.
\end{proposition}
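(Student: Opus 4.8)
The plan is to exploit the product structure of the parametric plug. Since $\mathcal{X}_K^0$ has no $\partial_{y_i}$ components, each slice $\BK \times \{y_0\}$ is invariant under the flow, and on it $\mathcal{X}_K^0$ restricts to $X_K^{t_0}$ with $t_0 = \eta(|y_0|) \in [0,2]$, where $t_0 = 0$ exactly when $|y_0| \leq 1/2$. Thus all four properties, as well as the defining conditions of a parametric plug, reduce to leaf-wise statements about the three-dimensional fields $X_K^{t_0}$, which have already been analysed. First I would record the parametric-plug conditions: there are no $\partial_{y_i}$ components by construction, and $\mathcal{X}_K^0$ equals $\partial_z$ near $\partial(\BK \times \mathbb{D}^l)$, because each $X_K^t$ is $\partial_z$ near $\partial \BK$ and because $\eta(|y_0|) = 2$ (hence $X_K^2 = \partial_z$) for $|y_0|$ close to $1$.

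For Property (i.) I would simply take the family $\mathcal{X}_K^s$, $s \in [0,1]$, as the homotopy. At $s = 1$ one has $\eta_1 \equiv 2$, so $\mathcal{X}_K^1 = \partial_z$; moreover $\eta_s \geq 2s > 0$ everywhere for $s > 0$, which places every leaf in the regime $t_0 > 0$ of Lemma \ref{lem:triviality}. Each $X_K^t$ is non-singular, by Lemma \ref{lem:paramWilson} together with the fact that pushing forward by the diffeomorphisms $\sigma_i^t$ preserves non-vanishing, so $\mathcal{X}_K^s$ is non-singular and has no $\partial_{y_i}$ components for every $s$. Near $\partial \BK$ one has $\mathcal{X}_K^s = \partial_z$ for all $s$, and near $\partial \mathbb{D}^l$ one has $\eta_s(|y_0|) = 2$, again giving $\partial_z$; hence the homotopy is relative to the boundary, establishing Property (i.).

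Properties (ii.) and (iii.) follow leaf by leaf. On a slice with $t_0 > 0$, Lemma \ref{lem:triviality} gives that there are no closed orbits and that every orbit enters and exits at opposing points; on a slice with $t_0 = 0$ the field is exactly Kuperberg's plug $X_K = X_K^0$, which has no closed orbits and whose traversing orbits match opposite endpoints by Kuperberg's construction. Since the $y$-coordinate is preserved, entry at $(-2, x_0, y_0)$ is matched by exit at $(2, x_0, y_0)$, so Property (ii.) holds, and no orbit of $\mathcal{X}_K^0$ can close up, so Property (iii.) holds. For Property (iv.), I would restrict attention to the leaves with $|y_0| < 1/2$, where the field is exactly $X_K$. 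Matsumoto's Theorem \ref{thm:Matsumoto} supplies an open set $T_K \subset \BK^-$ of orbits trapped by the Kuperberg plug; since trapping takes place within a single leaf, the set $T_N = T_K \times \{|y| < 1/2\}$ is a non-empty open subset of $(\BK \times \mathbb{D}^l)^-$ consisting of trapped trajectories.

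The step I expect to require the most care is Property (iv.): one must make sure that the trapped set is genuinely open in the parametric boundary and that trapping is inherited from the purely three-dimensional statement, which forces the restriction to the open half-disc $\{|y_0| < 1/2\}$ on which $\mathcal{X}_K^0$ coincides with the honest Kuperberg plug rather than one of its trivialised deformations. A secondary point worth checking is that the $t_0 = 0$ case of Property (ii.) is not covered by Lemma \ref{lem:triviality} (which only addresses $t > 0$, where all orbits traverse) and must instead be taken from Kuperberg's original matching property for the plug.
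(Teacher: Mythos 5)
Your proposal is correct and follows essentially the same route as the paper: Property (i.) via the homotopy $\mathcal{X}_K^s$ (non-vanishing because each $X_K^t$ is), Properties (ii.) and (iii.) slice by slice using Lemma \ref{lem:triviality} for the leaves with $\eta(|y_0|)>0$ and Kuperberg's original plug properties on the leaves with $|y_0|\leq 1/2$, and Property (iv.) from Matsumoto's theorem, taking the trapped set $T_{\BK}\times\{|y|<1/2\}$. Your closing remarks on openness of the trapped set and on the $t_0=0$ case of Property (ii.) falling outside Lemma \ref{lem:triviality} are exactly the points the paper handles implicitly, so there is no gap.
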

\begin{proof}
$\mathcal{X}_K^s$ is the necessary homotopy between $\mathcal{X}_K^0$ and $\mathcal{X}_K^1 = \partial_z$. That this homotopy is through non--vanishing foliated vector fields follows from the fact that the $X_K^t$ were non--vanishing. Property (i.) holds. 

A theorem of Matsumoto \cite{Mat} states that Kuperberg's plug traps a non--empty open set of orbits $T_\BK$. Since $(\mathcal{X}_K^0)_{\{y=y_0\}}$ agrees with the vector field in Kuperberg's plug for $y_0 \in \mathbb{D}^l_{1/2}$, it is immediate that $(\BK, \mathcal{X}_K^0)$ traps the open set $T_\BK \times \mathbb{D}^l_{1/2}$. Property (iv.) follows.

For $|y_0| > 1/2$ it holds that $\rho(|y_0|) > 0$. Hence, applying Lemma \ref{lem:triviality} to the flow $(\mathcal{X}^0_K)|_{\{y=y_0\}} = X_K^{\rho(|y_0|)}$ shows that $\mathcal{X}^0_K$ has no closed orbits in $|y_0| > 1/2$ and all orbits there go through the plug entering and exiting at opposing points. For $|y_0| \leq 1/2$, $(\BK, (\mathcal{X}^0_K)|_{\{y=y_0\}})$ is the Kuperberg plug. This proves Properties (ii.) and (iii.).
\end{proof}

\section{Foliations with leaves of dimension 2} \label{ssec:dim2}

In this section $M^3$ will denote a connected orientable compact smooth $3$-manifold, possibly with boundary. It will be endowed with a $2$-dimensional foliation $\SF^2$, which is assumed to be orientable and tangent to the boundary of $M$. Further, let $X$ be a non--singular vector field tangent to $\SF$.

\begin{lemma} \label{lem:standardForm}
Let $(\mathbb{T}^2, \SF_T)$ be a smooth foliation by lines in the torus. If $\SF_T$ has no Reeb components, then it is equivalent, up to conjugation by a homeomorphism of $\mathbb{T}^2$, to the foliation induced by the suspension of a diffeomorphism of the circle. If $\SF_T$ has no closed orbits then the diffeomorphism of the circle is an irrational rotation.
\end{lemma}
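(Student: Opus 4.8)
The plan is to reduce the statement to the classical theory of foliations on the torus, principally the qualitative theory of flows without singularities (equivalently, of circle diffeomorphisms via the Poincar\'e return map). Since $\SF_T$ is a foliation by lines, it is given (after choosing a Riemannian metric, or simply a tangent line field orientation) by the orbit foliation of a non--singular vector field $Y$ on $\mathbb{T}^2$. The whole statement is really a packaging of the Denjoy--Poincar\'e picture together with the fact that the only obstruction to having a global transversal is a Reeb component.

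First I would produce a closed curve transverse to $\SF_T$. The hypothesis that $\SF_T$ has no Reeb components is exactly what guarantees this: by the classical structure theory for foliations of the torus (I would cite the standard references on codimension--one foliations, e.g. the work following Kneser, Novikov and Hector--Hirsch), a $C^0$ foliation by lines on $\mathbb{T}^2$ with no Reeb component admits a closed transversal $\gamma$ that meets every leaf. Concretely, the absence of Reeb components rules out the only local model in which leaves ``turn back'' and trap a transversal, so a transversal can be pushed around and closed up. Once such a $\gamma$ is in hand, I would take it to be (isotopic to) one of the standard generating circles of $\mathbb{T}^2$, and the return map of the flow to $\gamma$ defines a homeomorphism $h$ of the circle; the suspension of $h$ is then, by construction, conjugate to $(\mathbb{T}^2,\SF_T)$ via the homeomorphism built from the flowbox/suspension coordinates. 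To get $h$ to be a genuine diffeomorphism rather than merely a homeomorphism one uses that $Y$ is smooth and transverse to $\gamma$, so the return map is a smooth diffeomorphism where defined; this gives the first assertion.

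For the second assertion I would invoke the theory of the rotation number. If $\SF_T$ has no closed leaves, then the circle diffeomorphism $h$ has no periodic points, so its rotation number $\rho(h)$ is irrational. By the Denjoy theorem (here the smoothness of $h$, in fact $C^2$, or minimality is what rules out a Denjoy counterexample with a wandering Cantor set), $h$ is topologically conjugate to the rigid rotation $R_{\rho(h)}$. That topological conjugacy of circle maps induces, via the suspension, a homeomorphism of $\mathbb{T}^2$ conjugating $\SF_T$ to the linear foliation, which is exactly the suspension of the irrational rotation. This yields the stated normal form.

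The main obstacle is the very first step: cleanly extracting a closed transversal meeting every leaf from the ``no Reeb component'' hypothesis, and organizing the suspension coordinates so that the conjugating map is a bona fide homeomorphism of $\mathbb{T}^2$ (and not merely defined leafwise). The dynamical input in the second half --- rotation number plus Denjoy --- is entirely classical and I would quote it; the care is all in the topology of the transversal and in matching orientations so that the suspension genuinely recovers $\SF_T$. I would also be careful about regularity: the conjugacy produced is only topological (a homeomorphism), which is consistent with the statement, since Denjoy's conjugacy to a rotation is in general not smooth.
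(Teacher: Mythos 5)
The paper offers no argument for this lemma: its entire ``proof'' reads ``This is a well known fact. A proof can be found in \cite{HeHi}'', i.e.\ it defers wholesale to Hector--Hirsch. Your sketch is, in substance, exactly the classical argument contained in that reference: absence of Reeb components yields a closed transversal meeting every leaf, the (smooth) return map to that transversal exhibits $\SF_T$ as a suspension up to homeomorphism, and Poincar\'e's rotation number theory together with Denjoy's theorem --- applicable because the foliation, hence the return map, is smooth --- converts ``no closed leaves'' into topological conjugacy with an irrational rotation. So you are taking the same route the paper points to, only writing it out; your handling of the regularity issues (the return map is genuinely smooth even though the final conjugacy is only topological) is correct, and citing the classical literature for the transversal-existence step is legitimate, since that is precisely the nontrivial content of the structure theorem.

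There is, however, one genuine gap, and it sits in your very first line: a foliation by lines on $\mathbb{T}^2$ need not be orientable, so you cannot simply declare it to be the orbit foliation of a non-singular vector field $Y$ (a Riemannian metric does not orient a line field; it only produces two unit vectors at each point). For instance, the line field on $\mathbb{R}^2/\mathbb{Z}^2$ spanned by $(\cos \pi x, \sin \pi x)$ is a smooth, non-orientable foliation by lines. The lemma is still true for such foliations, but only vacuously: one must show that a non-orientable foliation of $\mathbb{T}^2$ always contains a Reeb component (the example above does --- cutting along the circle leaf $\{x=1/2\}$ exhibits the Reeb annulus), or else pass to the orientation double cover and push the conclusion back down. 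Your proof as written covers only the orientable case. In the context of this paper the omission is harmless, since the lemma is applied only to foliations induced by a non-singular vector field $X$, which are oriented by construction; but as a proof of the lemma as stated, the non-orientable case needs to be explicitly excluded.
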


This is a well known fact. A proof can be found in \cite{HeHi}. The following proposition establishes the existence of at least two periodic orbits for any vector field tangent to the standard Reeb component. The corollary after the proposition is an immediate consequence of Novikov's compact leaf theorem.

\begin{proposition} \label{prop:standardReeb}
Let $(M^3, \SF^2)$ be a standard Reeb component. Then $X$ induces a Reeb component on its boundary torus. In particular, $X$ has at least $2$ closed orbits.
\end{proposition}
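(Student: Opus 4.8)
The plan is to restrict $X$ to the boundary torus $T$ and study the one-dimensional foliation $\SF_X$ of $T$ by the orbits of $X|_T$. Since $X$ is tangent to $\SF$ and $T$ is a leaf, $X|_T$ is a non-singular field on $T^2$, so by Lemma \ref{lem:standardForm} it is enough to show that $\SF_X$ is \emph{not} a suspension, i.e. that it carries a Reeb component; the ``at least $2$ closed orbits'' assertion is then automatic, as a Reeb component has two closed boundary leaves. I would work in the standard model $V=\mathbb{D}^2\times\mathbb{S}^1$ with coordinates $(\rho,\phi,\psi)$, in which the interior leaves are the graphs $\{\psi=g(\rho)+c\}$ with $g$ increasing and $g(\rho)\to+\infty$ as $\rho\to1$, each diffeomorphic to $\mathbb{R}^2$, while $T=\{\rho=1\}$.

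The engine is the planar Poincar\'e--Bendixson theorem applied leaf by leaf. On an interior leaf $L\iso\mathbb{R}^2$ the field $X|_L$ is non-singular, hence has no closed orbit (a closed orbit would enclose a zero by the index formula) and, again by Poincar\'e--Bendixson, every orbit leaves each compact subset of $L$ in both forward and backward time. Since $L$ accumulates onto $T$ at infinity, this forces $\rho\to1$ along any orbit $\gamma$ of $X|_L$ as $t\to\pm\infty$; consequently the lift of $\psi$ along $\gamma$ tends to $+\infty$ at \emph{both} ends, so it decreases as $t$ grows from $-\infty$ and increases as $t\to+\infty$. The limit sets $\omega(\gamma)$ and $\alpha(\gamma)$ are therefore non-empty, compact, $X|_T$-invariant subsets of $T$.

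Next I would identify $\omega(\gamma)$ and $\alpha(\gamma)$ as single closed orbits. For a non-singular flow on $T^2$ a limit set is either a closed orbit or carries non-trivial recurrence (a minimal or exceptional set); the recurrent alternative is excluded precisely because $\gamma$ lives in the plane $L$, where its successive intersections with a transversal are monotone along that transversal. Hence $\gamma$ can approach a point of $T$ only from one side, whereas recurrence of the limiting orbit would force two-sided return. Thus $\omega(\gamma)=\gamma_+$ and $\alpha(\gamma)=\gamma_-$ are closed orbits of $X|_T$. Because $\gamma$ accumulates on $\gamma_+$ while the lift of $\psi$ increases and on $\gamma_-$ while it decreases, the orbit $\gamma_+$ is positively and $\gamma_-$ negatively oriented in the longitudinal class; in particular $\gamma_+\neq\gamma_-$, which already yields two distinct closed orbits, and two adjacent, oppositely oriented closed orbits bound a Reeb component.

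The main obstacle is the third step: converting the planar monotonicity of transversal crossings inside $L$ into a rigorous exclusion of recurrent limit sets on $T$. This requires comparing the crossings of an ambient transversal of $X$ with the crossings of its intersection with the moving leaf $L$ as $\rho\to1$, and verifying that the signs of the $\psi$-winding genuinely descend to the limiting closed orbits. Once this is in place, concluding that two oppositely oriented adjacent closed orbits constitute a Reeb component (rather than merely two closed orbits) is a direct appeal to the structure theory underlying Lemma \ref{lem:standardForm}.
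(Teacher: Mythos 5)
Your overall strategy (restrict to the boundary torus, produce two oppositely oriented closed orbits there, conclude a Reeb component) could in principle work, but the step you yourself flag as ``the main obstacle'' is a genuine gap, and not merely a technical one. The dichotomy you invoke --- that a limit set of a non--singular flow on $\mathbb{T}^2$ is either a closed orbit or carries non--trivial recurrence --- is false. The set $\omega(\gamma)$ is only known to be a compact, connected, invariant subset of $T$, and besides closed orbits, minimal sets and the whole torus, such a set can be a finite union of closed orbits together with heteroclinic orbits connecting them. A set of this type contains no non--trivially recurrent orbit, so planar monotonicity of crossings inside the leaf $L$ has nothing to bite on. Worse, this is exactly the configuration arising in a case your proof must exclude: if $X|_T$ were the suspension of a circle diffeomorphism whose fixed points are semi--stable (attracting on one side, repelling on the other), then $T$ carries no Reeb component, all closed orbits of $X|_T$ are parallel and equally oriented, and the natural candidates for $\omega(\gamma)$ and $\alpha(\gamma)$ are precisely such heteroclinic unions; your argument would then output closed orbits $\gamma_+$, $\gamma_-$ of the \emph{same} orientation and produce no contradiction and no Reeb component. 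There is a second, related gap in the orientation step even when the limit sets are single closed orbits: ``the lift of $\psi$ tends to $+\infty$'' only forces the longitudinal winding of $\gamma_+$ to be $\geq 0$ (the per--loop displacement can tend to $0$ while its sum diverges), so you cannot rule out that $\gamma_+$ or $\gamma_-$ is a meridian, in which case the conclusion $[\gamma_+]=-[\gamma_-]\neq 0$ and the final ``adjacent oppositely oriented orbits bound a Reeb annulus'' step both collapse.

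For contrast, the paper's proof never analyses limit sets at all; it is a degree argument, and it is worth seeing why that sidesteps your difficulties. Assuming $X|_T$ has \emph{no} Reeb component, Lemma \ref{lem:standardForm} makes $X|_T$ topologically a suspension (irrational or with periodic points), and in either case one represents the meridian class by a curve $\gamma$ along which $X$ is everywhere either tangent to $\dot\gamma$ or forms a positively oriented basis with it; hence $X$ has winding number $0$ along $\gamma$. On the other hand, the planar leaves inside the Reeb component contain circles asymptotic to $\psi\circ\gamma$ which bound disks \emph{in those leaves}, so non--singularity of $X$ forces winding number $1$ along them by Poincar\'e--Hopf; continuity of the degree gives the contradiction. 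Note that this argument is exactly what is needed to exclude your problematic cases (meridional closed orbits, suspensions with semi--stable fixed points), so filling the gap in your proposal would essentially amount to redoing the paper's proof; the dynamical machinery (Poincar\'e--Bendixson, limit sets, shadowing) does not substitute for it.
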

\vspace{-1em}
\begin{proof}
Denote by $\mathcal{X}$ the oriented foliation by lines induced by $X$ on the boundary torus $T$ of the Reeb component. Assume that $\mathcal{X}$ has a Reeb component in $T$. Since this foliation is orientable, the Reeb component cannot have as boundary a single leaf $\mathbb{S}^1$, so the vector field $X$ must have at least $2$ closed orbits. Let us now assume that $\mathcal{X}$ does not have a Reeb component.

Parametrise $M = \mathbb{D}^2 \times \mathbb{S}^1$ explicitely with coordinates $(r,\theta,t)$, $|r| \leq 1$. Consider the one sided neighbourhood $\phi: (0,1] \times \mathbb{T}^2 \to M$, $\phi(r,\theta, t) \to (\frac{1+r}{2},\theta,t)$ of the boundary torus $T$. Any curve representing the homology class $m \in H_1(T;\mathbb Z)$ that vanishes by inclusion into $M$ is called a meridian.

 Using Lemma \ref{lem:standardForm} in the $\mathbb{T}^2$ coordinates yields a new (maybe topological) embedding $\psi: (0,1] \times \mathbb{T}^2 \to M$ such that $\psi^* \mathcal{X}$ is a suspension of a diffeomorphism of the circle in the torus $\{1\} \times \mathbb{T}^2$. 

Suppose that $\psi^* \mathcal{X}$ corresponds to the irrational rotation, then any curve with rational slope makes a constant angle with $\psi^* \mathcal{X}$. Note that, in particular, the homology class $(\psi|_{\{1\} \times \mathbb{T}^2})^* m$ of the meridian under this new parametrisation can be represented by some smooth curve $\gamma$ with rational slope. Accordingly, $\psi^* \mathcal{X}$ and the tangent vector $\overset{.}{\gamma}$ define, at each point in the image of $\gamma$, a positively oriented basis.

Suppose instead that $\psi^* \mathcal{X}$ corresponds to a suspension of a diffeomorphism of $\mathbb{S}^1$ with fixed points. The meridian class $(\psi|_{\{1\} \times \mathbb{T}^2})^* m$ can be represented by a smooth curve $\gamma: \mathbb{S}^1 \to \{1\} \times \mathbb{T}^2$. Denoting this class by $(a,b)$, where the first component stands for the suspension direction, the curve $\gamma$ can be set to agree with a compact leaf of $\mathcal X$ for almost $a$ turns and then to turn $b$ times transversely. Accordingly, the foliation $\psi^* \mathcal{X}$ and the tangent vector $\overset{.}{\gamma}$ are, at each point in the image of $\gamma$, either colinear or define a positively oriented basis.

Summarizing, if the foliation $\psi^* \mathcal{X}$ does not have a Reeb component, it admits a smooth curve $\gamma: \mathbb{S}^1 \to \{1\} \times \mathbb{T}^2$ representing the meridian class $(\psi|_{\{1\} \times \mathbb{T}^2})^* m$, such that $\psi^* \mathcal{X}$ and $\overset{.}{\gamma}$ are either colinear or define a positively oriented basis at every point. The degree of $\psi^* \mathcal{X}$ restricted to the image of $\gamma$ is therefore $0$. Since the degree is invariant by homeomorphism, we conclude that $\mathcal X$ has degree $0$ on the image of the curve $\psi \circ \gamma$.

Now every leaf inside the Reeb component has a family of circles that asymptotically approach the image of $\psi \circ \gamma$. The previous discussion implies that $X$ restricted to any given $\mathbb{R}^2$ leaf in the Reeb component is a non--singular vector field that restricted to some circle has degree $1$ (with respect to the standard basis of $\mathbb{R}^2$). Using Poincar\'e-Hopf index theorem we get a contradiction, thus implying that $X$ has a Reeb component in the boundary torus $T$, as we desired to prove. 
\end{proof}

\begin{corollary}
Any non--singular vector field tangent to a codimension one foliation of $\mathbb S^3$ has at least $2$ closed orbits.
\end{corollary}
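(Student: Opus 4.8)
The plan is to reduce the statement to Proposition \ref{prop:standardReeb} by locating a Reeb component inside the foliation. Recall that Novikov's compact leaf theorem asserts that every smooth codimension one foliation of $\mathbb{S}^3$ (indeed, of any closed $3$--manifold with finite fundamental group) contains a Reeb component: there is a solid torus $R \iso \mathbb{D}^2 \times \mathbb{S}^1 \subset \mathbb{S}^3$ whose boundary is a compact torus leaf of $\SF$ and such that $\SF|_R$ is the standard Reeb foliation. So the first step is simply to invoke this theorem and fix such an $R$.

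Next I would check that $(R, \SF|_R)$ together with $X|_R$ satisfies the hypotheses of the previous section. Since $\mathbb{S}^3$ is orientable and simply connected, both the tangent plane field $T\SF$ and the normal line bundle of $\SF$ are trivial, so $\SF$ is orientable; in particular $\SF|_R$ is an orientable foliation tangent to $\partial R$, and $X|_R$ is a non--singular vector field tangent to it. Thus $(R, \SF|_R, X|_R)$ is precisely a standard Reeb component carrying a foliated non--singular vector field in the sense required by Proposition \ref{prop:standardReeb}.

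Finally, Proposition \ref{prop:standardReeb} applied to $(R, \SF|_R)$ and $X|_R$ yields that $X|_R$ induces a Reeb component on the boundary torus $\partial R$, and hence has at least two closed orbits inside $R$. These are closed orbits of $X$ in $\mathbb{S}^3$, which proves the corollary.

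The only genuine content beyond bookkeeping is the appeal to Novikov's theorem. The main points to verify are that the Reeb component it supplies is \emph{standard}, so as to match the hypothesis of Proposition \ref{prop:standardReeb}, and that the orientability assumptions of the section hold automatically; both are immediate for $\mathbb{S}^3$ by the simple connectivity argument above, which is exactly why the statement is flagged as an immediate consequence of the compact leaf theorem.
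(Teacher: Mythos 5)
Your proposal is correct and is exactly the paper's intended argument: the authors state the corollary as an immediate consequence of Novikov's compact leaf theorem combined with Proposition~\ref{prop:standardReeb}, which is precisely your reduction. Your additional verification that orientability of $\SF$ is automatic on $\mathbb{S}^3$ (trivial normal bundle by simple connectivity) is a worthwhile detail the paper leaves implicit.
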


Proposition~\ref{prop:standardReeb} can be proved in more generality. Following~\cite{IY76} and~\cite{Sch11} we introduce the following definition.

\begin{definition}
A foliation $(M, \SF)$ is called a generalised Reeb component if $M$ is connected, $\partial M$ is a union of leaves of $\SF$, no couple of points on $\partial M$ can be joined by a curve transverse to the foliation, and all the leaves in $\SF|_{\overset{\circ}{M}}$ are proper and without holonomy.
\end{definition}

In particular, this means that $\partial M$ is a union of tori. The following lemma, which is a straightforward consequence of~\cite[Corollary 2]{Ima76} and~\cite[Theorem 1]{Ni}, states that the behaviour near the boundary components is just like the one found in a standard Reeb component:

\begin{lemma} \label{lem:reebLocalModel}
Let $(M, \SF)$ be a generalised Reeb component and let $T \subset \partial M$ be one of the boundary components. Then the one sided holonomy along $T$ is an infinite cyclic group. In particular, there is a basis $(\alpha, \beta)$ for $H_1(T)$ such that the holonomy along $\alpha$ is contracting and the holonomy along $\beta$ is the identity. 
\end{lemma}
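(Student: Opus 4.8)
The plan is to study the one--sided holonomy homomorphism of the boundary torus $T$ and to read off its structure from the two cited results. As $M$ and $\SF$ are orientable, $\SF$ is transversely orientable, so a small inward transversal at a point of $T$ is identified with a half--open interval $[0,\epsilon)$ (with $0 \leftrightarrow T$), and holonomy transport along loops in $T$ produces germs at $0$ of orientation--preserving homeomorphisms of $[0,\epsilon)$ fixing $0$. Since $\pi_1(T) \cong \mathbb{Z}^2$ is abelian this assembles into a homomorphism
\[ \mathrm{hol}\colon H_1(T) \cong \mathbb{Z}^2 \lra G, \]
with $G$ the group of such germs. The lemma is then equivalent to the assertion that $\mathrm{Image}(\mathrm{hol})$ is infinite cyclic, generated by a contraction, and that $\ker(\mathrm{hol})$ is a primitive rank--one summand of $\mathbb{Z}^2$. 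Granting this, I would take $\beta$ to be a generator of the kernel and complete it to a basis $(\alpha,\beta)$; then $\mathrm{hol}(\beta) = \mathrm{id}$, while $\mathrm{hol}(\alpha)$ generates the image, and after possibly reversing the orientation of $\alpha$ it is the contracting generator.

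First I would note that $G$ is torsion--free, since a nontrivial orientation--preserving homeomorphism germ of $[0,\epsilon)$ fixing the endpoint has infinite order. Hence $\mathrm{Image}(\mathrm{hol})$ is a finitely generated torsion--free abelian group, so it is trivial, $\mathbb{Z}$, or $\mathbb{Z}^2$. To rule out triviality and to locate a contraction I would use that the interior leaves are proper and that no two boundary points are joined by a transverse curve: these force the interior leaves to accumulate on $T$ only from the inside. The precise local picture is the one furnished by \cite[Corollary 2]{Ima76}, describing the foliation near the boundary of such a component and exhibiting the nearby leaves spiralling onto $T$. A leaf spiralling onto a proper leaf cannot have trivial holonomy (otherwise the collar would be a product and the nearby leaves compact tori), and properness of the spiralling leaf forces $0$ to be an attracting fixed point, i.e.\ the corresponding germ is a contraction. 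Thus $\mathrm{Image}(\mathrm{hol})$ contains an element of infinite order.

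It remains to exclude $\mathrm{Image}(\mathrm{hol}) \cong \mathbb{Z}^2$. Here I would invoke \cite[Theorem 1]{Ni} together with the hypothesis that \emph{every} interior leaf is without holonomy: a rank--two holonomy image at $T$ is incompatible with triviality of the holonomy of the nearby interior leaves, so the image must be cyclic and generated by the contraction found above. Once $\mathrm{Image}(\mathrm{hol}) \cong \mathbb{Z}$ is known, its kernel is the kernel of a surjection $\mathbb{Z}^2 \to \mathbb{Z}$ onto a torsion--free group, hence a primitive summand, and the basis $(\alpha,\beta)$ is obtained as described in the first paragraph.

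The hard part is precisely this last dichotomy: proving that the boundary holonomy is cyclic rather than of rank two, equivalently that a full isotopy class of loops (the class $\beta$) on $T$ has trivial holonomy. This is where the absence of holonomy in the interior is genuinely used, and it is exactly the structural content I would import from \cite[Corollary 2]{Ima76} and \cite[Theorem 1]{Ni} rather than reprove from scratch. By comparison, producing the contracting generator and completing it to a basis are soft once the spiralling local model and the cyclicity of the image are in hand.
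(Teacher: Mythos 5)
Your proposal is correct and takes essentially the same route as the paper: the paper gives no independent argument for this lemma, stating only that it is a straightforward consequence of \cite[Corollary 2]{Ima76} and \cite[Theorem 1]{Ni}, which are exactly the two results you import for the hard steps (the spiralling local model near $T$ and the exclusion of rank--two boundary holonomy). The surrounding bookkeeping you supply --- torsion--freeness of the germ group, the kernel of a surjection onto $\mathbb{Z}$ being a primitive summand, and promoting a power of the generator to a contracting generator after possibly reversing orientation --- is sound.
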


We shall see in Theorem \ref{thm:generalisedReeb} below that in most generalised Reeb components $X$ must carry closed orbits. First we characterise the exceptions. Consider the annulus $\mathbb{S}^1 \times [0,1]$ and denote by $\SF_R$ the $1$-dimensional Reeb foliation on the annulus. We will abuse notation and still denote by $\SF_R$ its lift as a codimension one foliation to $\mathbb{T}^2\times [0,1]$.

\begin{lemma}
Let $(M^3, \SF^2)$ be a generalised Reeb component. Suppose one of the leaves $F$ is a cylinder. Then $(M,\SF)$ is homeomorphic to $(\mathbb{T}^2 \times [0,1],\SF_R)$.
\end{lemma}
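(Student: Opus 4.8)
The plan is to reconstruct the product Reeb structure from the single cylinder leaf $F$ together with the boundary model of Lemma \ref{lem:reebLocalModel}. Write $F\cong\mathbb{S}^1\times\mathbb{R}$ and let $c\subset F$ be a core circle, generating $H_1(F)$. Being an interior leaf, $F$ is proper and has trivial holonomy, so $c$ has trivial holonomy and admits a saturated neighbourhood in which the nearby leaves carry parallel core circles $c_s$, with $s$ ranging over a short transversal $\tau$ through $c$. On the other side, Lemma \ref{lem:reebLocalModel} gives, near each boundary torus $T$, a half--open Reeb collar in which the interior leaves are cylinders spiralling onto $T$ with core freely homotopic to the trivial--holonomy class $\beta$ of $T$.

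First I would locate the two ends of $F$. Since $F$ is proper, each end accumulates on a nonempty compact saturated set, and I would argue, using properness together with the contracting boundary holonomy of Lemma \ref{lem:reebLocalModel}, that this limit set is a boundary torus: an interior accumulation would force a proper leaf to pile up on a noncompact limit leaf without holonomy, which the absence of interior holonomy (and the structure theory for foliations without holonomy) forbids. Comparing the spiralling end of $F$ with the half--Reeb collar then identifies $c$ with the class $\beta$ of the boundary torus onto which that end limits.

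Next I would propagate the family $c_s$. Extending $\tau$ maximally and using that every interior leaf met is proper and without holonomy, the core circles persist as embedded essential circles that cannot change topological type in the interior; the family can only degenerate by spiralling onto a boundary torus, which by the previous step is what occurs at both ends. This organises the core circles, together with the $\beta$--circles of the boundary tori, into a topological circle bundle $\pi:M\to A$ over a compact surface $A$, with $\SF=\pi^{*}\bar\SF$ for a one--dimensional foliation $\bar\SF$ on $A$ having $\partial A$ as leaves. The foliation $\bar\SF$ is nonsingular and tangent to $\partial A$, so Poincar\'e--Hopf forces $\chi(A)=0$; since $A$ has nonempty boundary and, by the orientability of $M$ and $\SF$, is orientable, $A$ must be an annulus. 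Hence $\partial M$ consists of exactly two tori and $M\cong\mathbb{S}^1\times A\cong\mathbb{T}^2\times[0,1]$. Finally, Lemma \ref{lem:reebLocalModel} forces the interior leaves of $\bar\SF$ to spiral onto both boundary circles of $A$ with contracting holonomy and without closed interior leaves, which identifies $(A,\bar\SF)$ with the Reeb annulus; trivialising the circle bundle then yields a foliated homeomorphism $(M,\SF)\cong(\mathbb{T}^2\times[0,1],\SF_R)$.

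The main obstacle is the propagation and gluing of the third paragraph: controlling the noncompact cylinder leaf globally so as to assemble the local product neighbourhoods into a genuine circle bundle $\pi$. Concretely, one must rule out any Reeb--type degeneration of the core circles in the interior and any interior recurrence of $F$, show that the transversal meets every interior leaf and limits onto a boundary torus at each end, and manage the non--Hausdorff leaf space of the interior. The boundary analysis via Lemma \ref{lem:reebLocalModel} and the Euler--characteristic count are comparatively routine; it is the global behaviour of the noncompact leaf, where properness, the absence of interior holonomy, and the constraint on transverse arcs between boundary points must be combined, that carries the real weight.
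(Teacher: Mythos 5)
Your proposal has a genuine gap, and it sits exactly where you admit it does: the third paragraph. Assembling the local saturated product neighbourhoods of core circles into a global circle bundle $\pi:M\to A$ with $\SF=\pi^*\bar\SF$ \emph{is} the content of the lemma, and nothing in your sketch carries it out. The space of interior leaves (equivalently, of core circles) is a priori a non-Hausdorff one-manifold, a maximally extended transversal need not meet every interior leaf, and the local analysis around one cylinder leaf does not by itself rule out degeneration of the circle family in the interior; the same applies to your second paragraph, where the claim that each end of $F$ must accumulate on a boundary torus (rather than on some interior leaf or exceptional set) is asserted via ``structure theory'' but not proved. The paper resolves all of this in one stroke by quoting Imanishi's structure theorem \cite{Ima76} for codimension-one foliations almost without holonomy: once one interior leaf is a cylinder, $(\overset{\circ}{M},\SF|_{\overset{\circ}{M}})$ fibres over $\mathbb{S}^1$ with the leaves as fibres, all of them cylinders. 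After that, the compact piece $B$ between two boundary collars is trivialised by Ehresmann's fibration theorem, and the trivialisations are matched to the Reeb collars of Lemma \ref{lem:reebLocalModel} after correcting by Dehn twists. Unless you either cite a theorem of comparable strength or reprove it, your paragraph three is a restatement of the goal, not an argument.

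There is also a secondary gap at the end. Contracting one-sided holonomy at both boundary circles, cylinder interior leaves, and the absence of interior compact leaves do \emph{not} characterise the Reeb annulus: there is a second foliation of $\mathbb{T}^2\times[0,1]$ with exactly these properties, in which the leaves spiral onto the two boundary tori in ``opposite senses'' and which admits a transverse arc joining the two boundary components. To exclude it one must invoke the defining property of a generalised Reeb component --- no two boundary points are joined by a transverse curve --- and this is precisely the last step of the paper's proof (the dichotomy between $\pi\circ\psi_1^{-1}$ and $\pi\circ\psi_2^{-1}$ agreeing or not). Your appeal to Lemma \ref{lem:reebLocalModel} alone does not see this dichotomy, so even granting the bundle structure, your identification of $(A,\bar\SF)$ with the Reeb annulus is incomplete.
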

\vspace{-1em}
\begin{proof}
By~\cite{Ima76} it follows that $(\overset{\circ}{M},\SF|_{\overset{\circ}{M}})$ is a fibration over $\mathbb S^1$ whose leaves are diffeomorphic to cylinders. Since $M$ is orientable, the fibration $\pi: \overset{\circ}{M} \to \mathbb{S}^1$ is trivial.

Let $\phi_i: (0,1] \times \mathbb{T}^2 \to M$, $i=1,2$, be one--sided charts of the $2$ boundary components $\phi_i(\{1\} \times \mathbb{T}^2)$, with coordinates $(r,s,\theta)$. By Lemma \ref{lem:reebLocalModel}, it can be assumed that the holonomy is the identity in the $s$--direction and contracting in the $\theta$--direction. Then these local models can be assumed to agree with that of the standard Reeb component.

Since the leaves are proper, there are numbers $r_1, r_2$, such that the tori $S_i = \{r=r_i\} \subset \im(\phi_i)$ intersected with each leaf bound a compact cylinder. Then the $\phi_i$ can be reparametrised in the $\theta$--direction so that $\pi \circ \phi_i^{-1}(r_i,s,\theta) = \pm \theta$. The sign depends on whether the coorientation of $\SF$ agrees with the direction in which the holonomy is contracting. Denote by $B \subset \overset{\circ}{M}$ the manifold bounded by the tori $S_i$. Since $\pi: B \to \mathbb{S}^1$ is a submersion that is a fibration over each $S_i$, the Ehresmann fibration theorem implies that $B$ is a trivial $\mathbb{S}^1 \times (-1,1)$ bundle over $\mathbb{S}^1$. 

The boundary torus $S_i$ is endowed with two trivialisations, one coming from $B$ and the other from $\phi_i$. They might disagree by a number of Dehn twists in the $s$--direction. Denote their composition by $\tau: \mathbb{T}^2 \to \mathbb{T}^2$. Since the foliation structure in the chart $\phi_i$ is invariant under the action of $\tau$ on the $(s,\theta)$ coordinates, $\psi_i = \phi_i \circ \tau^{-1}$ is a new chart structure that makes the two trivialisations of $S_i$ agree. Therefore, the trivialisation from $B$ glues with the charts $\psi_i$ to yield $\mathbb{T}^2 \times [0,1]$ as a manifold. Further, if $\pi \circ \psi_1^{-1}(r_1,s,\theta) = \pi \circ \psi_2^{-1}(r_2,s,\theta)$, then $\SF$ is isomorphic to $\SF_R$. Otherwise, that is if the orientations of the boundary components are reversed, $(M^3, \SF^2)$ has a transverse path connecting two points of the boundary and is not a generalised Reeb component.
\end{proof}

Now the main result is immediate:

\begin{theorem} \label{thm:generalisedReeb}
Let $(M^3, \SF^2)$ be a generalised Reeb component. If $M$ is not homeomorphic to $\mathbb{T}^2 \times [0,1]$, then any vector field $X$ tangent to $\SF$ has at least $2$ closed orbits.
\end{theorem}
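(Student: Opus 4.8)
The plan is to reduce the statement to the argument already carried out for the standard Reeb component in Proposition \ref{prop:standardReeb}, the only genuinely new ingredients being the global description of the leaves and the boundary model supplied by Lemma \ref{lem:reebLocalModel}. First I would read the preceding lemma in its contrapositive form: the hypothesis that $M$ is not homeomorphic to $\mathbb{T}^2 \times [0,1]$ forces that no leaf of $\SF$ is a cylinder. Invoking the structure theory of proper foliations without holonomy (\cite{Ima76}), the interior leaves $\SF|_{\overset{\circ}{M}}$ are then all diffeomorphic to the plane $\mathbb{R}^2$. Fixing one boundary torus $T \subset \partial M$, let $\mathcal{X}$ denote the oriented non--singular line foliation that $X$ induces on $T$. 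Exactly as in Proposition \ref{prop:standardReeb}, it then suffices to prove that $\mathcal{X}$ carries a Reeb component on $T$: since $\mathcal{X}$ is orientable, such a component cannot be bounded by a single circle, and hence produces at least two closed orbits of $X$.

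The heart of the argument is to rule out the alternative. Suppose $\mathcal{X}$ has no Reeb component on $T$. By Lemma \ref{lem:reebLocalModel} there is a one--sided collar $\phi: (0,1] \times \mathbb{T}^2 \to M$ of $T$ along which the holonomy is contracting in the $\alpha$--direction and the identity in the $\beta$--direction; this is precisely the local model of the standard Reeb component near its boundary. Applying Lemma \ref{lem:standardForm} in these torus coordinates, $\mathcal{X}$ is conjugate to a suspension, either of an irrational rotation or of a circle diffeomorphism with fixed points. In each case I would reproduce, verbatim as in Proposition \ref{prop:standardReeb}, the construction of a smooth curve $\gamma \subset T$ representing the meridian class --- now characterised intrinsically as the $\beta$--class with trivial holonomy rather than through vanishing in $H_1(M)$ --- such that $\mathcal{X}$ and $\dot\gamma$ are everywhere either colinear or a positively oriented basis. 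Consequently the degree of $\mathcal{X}$ along $\gamma$ is $0$.

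It then remains to transport this degree computation into the leaves. Since the interior leaves spiral onto $T$, every planar leaf near $T$ contains a family of circles $C^0$--converging to $\gamma$; the framing mismatch between $T$ and the spiralling leaf shifts the count by one, so $X$ restricted to such a circle has degree $1$ with respect to a trivialisation of the leaf, which is a plane. But this circle bounds a disk inside the leaf on which $X$ is non--singular, so the Poincar\'e--Hopf index theorem forces the degree to be $0$, a contradiction. Hence $\mathcal{X}$ must have a Reeb component, and $X$ has at least two closed orbits, as claimed.

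I expect the principal obstacle to lie in the structural reduction of the first step: making precise, via \cite{Ima76}, that excluding cylindrical leaves leaves only planar ones, and verifying that the local picture furnished by Lemma \ref{lem:reebLocalModel} genuinely coincides with the boundary model of the standard Reeb component, so that the degree and Poincar\'e--Hopf arguments of Proposition \ref{prop:standardReeb} transfer without modification. The secondary point requiring care is the intrinsic identification of the meridian through the holonomy, since in a general generalised Reeb component this class need no longer vanish in $H_1(M)$.
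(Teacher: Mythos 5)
Your overall strategy is the paper's: assume the induced foliation on the boundary has no Reeb component, use Lemma \ref{lem:reebLocalModel} to run the degree computation of Proposition \ref{prop:standardReeb}, and get a contradiction with Poincar\'e--Hopf inside a leaf. But there is a genuine gap at the very first structural step: the claim that, once cylinder leaves are excluded, Imanishi's theory forces all interior leaves to be diffeomorphic to $\mathbb{R}^2$. This is false. What \cite{Ima76} gives is that $\SF|_{\overset{\circ}{M}}$ is a fibration over $\mathbb{S}^1$; the fibre can be any non--compact surface, not just a plane. For instance, spinning the product foliation of $\Sigma_{1,1}\times\mathbb{S}^1$ ($\Sigma_{1,1}$ a genus one surface with one boundary circle) near its boundary torus produces a generalised Reeb component whose interior leaves are punctured tori. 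Indeed, if plane leaves were the only possibility, $M$ would be a solid torus and the theorem would be nothing more than Proposition \ref{prop:standardReeb}; the entire content of Theorem \ref{thm:generalisedReeb} lies in handling leaves of more complicated topology, and the paper's statement ``not a cylinder, hence non--zero Euler characteristic'' (rather than ``hence a plane'') is exactly calibrated for this.

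This error breaks two later steps of your argument. First, for a non--planar leaf the circles approaching the boundary torus do not bound disks in the leaf, so your ``degree $1$ on a circle bounding a disk'' contradiction is unavailable. Second, a non--planar leaf has several ends, possibly spiralling onto different boundary tori, so fixing \emph{one} torus $T$ and assuming no Reeb component only on $T$ controls the winding of $X$ at only one end and yields no contradiction. The paper's proof repairs both points at once: assume $X$ induces no Reeb component on \emph{any} boundary component; then in a fixed non--compact leaf $F$ (which has $\chi(F)\neq 0$) choose a finite collection of circles $\gamma_i$, one near each end, cutting $F$ into a compact core $G$ (a deformation retract of $F$, so $\chi(G)=\chi(F)\neq 0$) and half--cylinders, with $X$ along each $\gamma_i$ either tangent or positively transverse. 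The latter condition forces the winding of $X$ relative to $\dot\gamma_i$ to vanish on every boundary circle of $G$, and Poincar\'e--Hopf applied to the non--singular field $X|_G$ then forces $\chi(G)=0$, the desired contradiction. Your proposal, as written, only recovers the case already covered by Proposition \ref{prop:standardReeb}.
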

\begin{proof}
Since $M$ is not homeomorphic to $\mathbb{T}^2 \times [0,1]$, none of the non--compact leaves of $\SF$ are cylinders. In particular, they must have non--zero euler characteristic. Assume that $X$, when restricted to all boundary components of $M$, induces no Reeb component. Applying Lemma \ref{lem:reebLocalModel} and proceeding as in Proposition \ref{prop:standardReeb} shows that, given some non--compact leaf $F$, there is a finite collection of closed curves $\gamma_i \subset F$ satisfying:
\begin{itemize}
\item $F \setminus \{\gamma_i\}$ is comprised of a compact component $G$ that is a deformation retract of $F$ and a collection of non--compact half--cylinders,
\item $X$ is either tangent or defines a positively oriented basis at each point of $\gamma_i$ (endowed with appropriate orientations). 
\end{itemize}
These properties again yield a contradiction using the Poincar\'e--Hopf index theorem.
\end{proof}

\section*{Acknowledgments}
The authors are thankful to Gilbert Hector for clarifying some concepts and explaining some results regarding Reeb components. We are also greatly indebted to Ana Rechtman and Steve Hurder for letting us use Figure~\ref{fig:kuperberg}. D.P.-S. is supported by the ERC starting grant~335079. A.P. and F.P. are supported by the Spanish National Research Project MTM2010--17389. This work is supported in part by the ICMAT Severo Ochoa grant SEV-2011-0087, and it is part of the last two authors' activities within CAST, a Research Network Program of the European Science Foundation.

\end{document}